\documentclass[reqno]{amsart}
%
%
\usepackage{amsmath,amsthm,amssymb,amscd}

\newtheorem{theorem}{Theorem}[section]
\newtheorem{lemma}[theorem]{Lemma}

\theoremstyle{definition}
\newtheorem{definition}[theorem]{Definition}

\theoremstyle{remark}

\numberwithin{equation}{section}
\allowdisplaybreaks
\begin{document}

%
%
%
%
%
%
%
%
%

\title[$\sigma_o\text{mex}(n)$ and $\sigma_e\text{mex}(n)$]
 {Arithmetic properties and asymptotic formulae for $\sigma_o\text{mex}(n)$ and $\sigma_e\text{mex}(n)$}

\author{Gurinder Singh}
\address{Department of Mathematics, Indian Institute of Technology Guwahati, Assam, India, PIN- 781039}
\email{gurinder.singh@iitg.ac.in}

\author{Rupam Barman}
\address{Department of Mathematics, Indian Institute of Technology Guwahati, Assam, India, PIN- 781039}
\email{rupam@iitg.ac.in}

\subjclass[2010]{Primary: 11P83, 11N37; Secondary: 11F11}

\keywords{Partitions; Minimal exludents; Modular forms; Congruences; Asymptotic formula}

\dedicatory{}

\begin{abstract} 
The minimal excludant of an integer partition is the least positive integer missing from the partition. Let $\sigma_o\text{mex}(n)$ (resp., $\sigma_e\text{mex}(n)$) denote the sum of odd (resp., even) minimal excludants over all the partitions of $n$. Recently, Baruah et al. proved a few congruences for these partition functions modulo $4$ and $8$, and asked for asymptotic formulae for the same. In this article, we study the lacunarity of $\sigma_o\text{mex}(n)$ and $\sigma_e\text{mex}(n)$ modulo arbitrary powers of $2$ and also prove some infinite families of congruences for $\sigma_o\text{mex}(n)$ and $\sigma_e\text{mex}(n)$ modulo $4$ and $8$. We also obtain Hardy-Ramanujan type asymptotic formulae for both $\sigma_o\text{mex}(n)$ and $\sigma_e\text{mex}(n)$.
\end{abstract}
\maketitle
\section{Introduction and statement of results} 
A partition of a positive integer $n$ is a non-increasing sequence of positive integers, called parts, whose sum is $n$. The number of partitions of $n$ is denoted by $p(n)$. In 2015, Fraenkel and Peled \cite{Fraenkel2015}, working in the area of game theory, defined the term minimal excludant for any set $S$ of positive integers as the least positive integer missing from $S$. Later in 2019, Andrews and Newman \cite{Andrews2019} introduced this in partition theory. They defined the minimal excludant of an integer partition $\pi$, denoted by $\text{mex}(\pi)$, as the least positive integer missing from the partition. With this they also considered the sum of minimal excludants over all the partitions of $n$, denoted by $\sigma\text{mex}(n)$:
\begin{align*}
\sigma\text{mex}(n):=\sum_{\pi\in \mathcal{P}(n)}\text{mex}(\pi),
\end{align*}
where $\mathcal{P}(n)$ is the set of all partitions of $n$. For example, the values of minimal excludants for each partition of $n=4$ are: $\text{mex}(4)=1$; $\text{mex}(3+1)=2$; $\text{mex}(2+2)=1$; $\text{mex}(2+1+1)=3$; $\text{mex}(1+1+1+1)=2$, with $\sigma\text{mex}(4)=1+2+1+3+2=9$. The works of Andrews and Newman \cite{Andrews2019, Andrews2020} motivated great research in the theory of partitions. Many mathematicians have introduced the concept of minimal excludant parts for various restricted partition functions, see, for example, \cite{Ballantine2020, Barman2021a, Barman2021b, Chakraborty, Silva2020, Hopkins2022a, Hopkins2022b, Kang2021, Kang2023, Kaur_2022}. Very recently, Baruah et al. \cite{Baruah2022} refined the arithmetic function $\sigma\text{mex}(n)$ by considering the sum of odd and even minimal excludants separately. More specifically, for a positive integer $n$, Baruah et al. \cite{Baruah2022} defined the following two arithmetic functions:
\begin{align*}
\sigma_o\text{mex}(n)&:=\sum_{\substack{\pi\in \mathcal{P}(n)\\ 2\nmid \text{mex}(\pi)}}\text{mex}(\pi),\\
\sigma_e\text{mex}(n)&:=\sum_{\substack{\pi\in \mathcal{P}(n)\\ 2\mid \text{mex}(\pi)}}\text{mex}(\pi).
\end{align*}
For example, for $n=4$, $\sigma_o\text{mex}(4)=1+1+3=5$ and $\sigma_o\text{mex}(4)=2+2=4$. Note that, for all $n\geq0$, $\sigma\text{mex}(n)=\sigma_o\text{mex}(n)+\sigma_e\text{mex}(n)$. Baruah et al. \cite{Baruah2022} established two identities involving $\sigma_o\text{mex}(n)$ and $\sigma_e\text{mex}(n)$. More precisely, they proved that
\begin{align}
G_o(q):=\sum_{n=0}^{\infty}\sigma_o\text{mex}(n)q^n&=\frac{1}{2}\left((-q;q)^2_{\infty}+(q;q)^2_{\infty}\right)=\frac{1}{2}\left(\frac{f^2_2}{f^2_1}+f^2_1\right) ,\label{5.1.4}\\
G_e(q):=\sum_{n=0}^{\infty}\sigma_e\text{mex}(n)q^n&=\frac{1}{2}\left((-q;q)^2_{\infty}-(q;q)^2_{\infty}\right)=\frac{1}{2}\left(\frac{f^2_2}{f^2_1}-f^2_1\right).\label{5.1.5}
\end{align}
Here and throughout this article, for $\lvert q\rvert<1$,
\begin{align*}
(a;q)_{\infty}:=\prod_{n=1}^{\infty}(1-aq^{n-1})
\end{align*}
and for any positive integer $m$, $f_m:=(q^m;q^m)_{\infty}$. Using \eqref{5.1.4} and \eqref{5.1.5}, following three Ramanujan-type congruences modulo $4$ and $8$ for $\sigma_o\text{mex}(n)$ and $\sigma_e\text{mex}(n)$ were also established in \cite{Baruah2022}:
\begin{align}
\sigma_o\text{mex}(2n+1)&\equiv0\pmod{4},\label{5.1.6}\\
\sigma_o\text{mex}(4n+1)&\equiv0\pmod{8},\label{5.1.7}\\
\sigma_e\text{mex}(4n)&\equiv0\pmod{4}.\label{5.1.8}
\end{align}
Very recently, Du and Tang \cite{Du_Tang2023} proved another two congruences for $\sigma_o\text{mex}(n)$ and $\sigma_e\text{mex}(n)$ modulo $8$ and $16$ which were conjectured by Baruah et al. in \cite{Baruah2022}:
\begin{align*}
\sigma_o\text{mex}(8n+1)&\equiv0\pmod{16},\\
\sigma_e\text{mex}(8n)&\equiv0\pmod{8}.
\end{align*}
\par The difficulty of proving congruences for $\sigma_o\text{mex}(n)$ and $\sigma_e\text{mex}(n)$ modulo powers of $2$ from \eqref{5.1.4} and \eqref{5.1.5} is due to the factor $\frac{1}{2}$ on the right hand sides of \eqref{5.1.4} abd \eqref{5.1.5}. In this article, we prove the following theorem which provides two new congruences for $\sigma_e\text{mex}(n)$ modulo $4$. We use Ramanujan's theta functions and certain identities involving them in the proof.
\begin{theorem}\label{thm5.1}
For all $n\geq0$, we have
\begin{align}
\sigma_e\emph{mex}(10n+6)&\equiv0\pmod{4},\label{thm1.5.1}\\
\sigma_e\emph{mex}(10n+8)&\equiv0\pmod{4}.\label{thm1.5.2}
\end{align}
\end{theorem}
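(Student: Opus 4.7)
The plan is to reduce $G_e(q)$ modulo $4$ to a tractable form, perform a $2$-dissection to obtain the generating function of $\sigma_e\text{mex}(2n)$, and then apply Ramanujan's $5$-dissection of $f_1$ to handle the two target arithmetic progressions.

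Starting from \eqref{5.1.5} and using $f_1^2=f_2\varphi(-q)$, I would rewrite $G_e(q)=f_2(1-\varphi(-q)^2)/(2\varphi(-q))$. Setting $g:=\sum_{n\geq 1}(-1)^n q^{n^2}$, we have $\varphi(-q)=1+2g$ and $1-\varphi(-q)^2=-4g(1+g)$, and expanding $1/(1+2g)\equiv 1-2g\pmod 4$ gives
\[
G_e(q)\;\equiv\; -2f_2(g+g^2)\pmod 4.
\]
I then 2-dissect $g+g^2$ via $g=\tfrac12(\varphi(q^4)-1)-q\psi(q^8)$; its even-exponent part simplifies, using $\varphi(q^2)^2=\varphi(q^4)^2+4q^2\psi(q^8)^2$, to $(\varphi(q^2)^2-1)/4$. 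Since $f_2$ is a series in $q^2$, extracting the even-indexed coefficients of $G_e$ and substituting $q^2\mapsto q$ yields
\[
\sum_{n\geq 0}\sigma_e\text{mex}(2n)\,q^n\;\equiv\;-\frac{f_1(\varphi(q)^2-1)}{2}\;\equiv\;-2f_1\,H(q)\pmod 4,
\]
where $H(q):=(\varphi(q)^2-1)/4=\sum_{n\geq 1}(d_1(n)-d_3(n))\,q^n$ has integer coefficients and $d_i(n)$ counts the divisors of $n$ congruent to $i\pmod 4$.

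It therefore suffices to show that $[q^{5n+3}](f_1H)$ and $[q^{5n+4}](f_1H)$ are even for every $n\geq 0$. Reducing modulo $2$ and using that $d_1(n)-d_3(n)\equiv d_{\mathrm{odd}}(n)\pmod 2$, together with the classical fact that $d_{\mathrm{odd}}(n)$ is odd precisely when $n$ is a perfect square or twice a perfect square, one obtains $H(q)\equiv A(q)+A(q^2)\pmod 2$ with $A(q):=\sum_{k\geq 1}q^{k^2}$. I would then apply Ramanujan's 5-dissection
\[
f_1=f_{25}\bigl(R(q^5)^{-1}-q-q^2R(q^5)\bigr)
\]
(with $R$ the Rogers-Ramanujan continued fraction), together with the 5-dissections of $A(q)$ and $A(q^2)$ arising from the residue sets $k^2\pmod 5\in\{0,1,4\}$ and $2k^2\pmod 5\in\{0,2,3\}$. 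Multiplying out and grouping by residue modulo $5$, the $q^{5n+3}$ and $q^{5n+4}$ components of $f_1(A(q)+A(q^2))$ become explicit sums of products of theta series of the form $f(-q^a,-q^b)$ involving $R(q^5)$, and each such component I would verify to be $\equiv 0\pmod 2$ by Jacobi triple product manipulations.

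The main obstacle is this final 5-dissection verification. The mod-$4$ reduction and the 2-dissection preceding it are routine applications of standard identities for Ramanujan's $\varphi$ and $\psi$ functions, but the mod-$2$ cancellation of the $q^{5n+3}$ and $q^{5n+4}$ components requires a careful identification of the resulting theta-product identities at level $25$, or equivalently, a sign-reversing involution on the lattice representations $p(m)+k^2$ and $p(m)+2k^2$ (where $p(m)=m(3m-1)/2$) whose exponents fall in the residue classes $3$ and $4$ modulo $5$. This cancellation step is the technical heart of the proof.
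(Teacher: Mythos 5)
Your opening reductions are correct and in fact land exactly where the paper lands. The chain $G_e(q)\equiv -2f_2(g+g^2)\pmod 4$, the $2$-dissection via $\varphi(q^2)^2=\varphi(q^4)^2+4q^2\psi(q^8)^2$, and the conclusion $\sum_{n\geq 0}\sigma_e\text{mex}(2n)q^n\equiv \pm 2f_1H(q)\pmod 4$ are all sound, and your $H(q)=\sum_{n\geq1}(d_1(n)-d_3(n))q^n$ is precisely the paper's Lambert series $\sum_{n\geq 1}q^n/(1+q^{2n})$ appearing in \eqref{5.3.14}--\eqref{5.3.15} (note $-2\equiv 2\pmod 4$, so the sign discrepancy is immaterial). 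Your mod-$2$ identification $H(q)\equiv A(q)+A(q^2)\pmod 2$ is also correct: $d_1(n)-d_3(n)\equiv d_{\mathrm{odd}}(n)\pmod 2$, and $d_{\mathrm{odd}}(n)$ is odd exactly when the odd part of $n$ is a square, i.e., when $n$ is a square or twice a square.

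The problem is that everything after that point is a plan, not a proof, and it is the decisive step. The residues do not separate: generalized pentagonal numbers are $\equiv 0,1,2\pmod 5$, squares are $\equiv 0,1,4$, and twice-squares are $\equiv 0,2,3$, so exponents $\equiv 3,4\pmod 5$ genuinely occur in $f_1\bigl(A(q)+A(q^2)\bigr)$ --- for instance $38\equiv 3\pmod 5$ has the two representations $38=22+16=2+36$ as pentagonal plus square --- and the required parity cancellation is a true but nontrivial theta-identity which you defer (``I would verify,'' ``a sign-reversing involution'') without constructing. There is no easy cross-family pairing either: both representations of $38$ lie in the square family, while $9$ has two representations in each family, so the hoped-for involution must mix families in a nonuniform way. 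The paper sidesteps this cancellation entirely by keeping $H$ in Lambert-series form, splitting the summation index by $5\mid n$ versus $5\nmid n$, and invoking Berndt's identity \eqref{5.3.10}, whose mod-$2$ consequence \eqref{5.3.11} converts the $5\nmid n$ part into $qf_5^5/f_1$; after multiplying by $f_1$, that piece becomes $qf_5^5$ (supported on exponents $\equiv 1\pmod 5$) and the $5\mid n$ piece times $f_1$ is supported on exponents $\equiv 0,1,2\pmod 5$, so the coefficients on $5n+3$ and $5n+4$ vanish mod $2$ with no cancellation left to engineer (the paper's dissections \eqref{5.3.16}--\eqref{5.3.17} exhibit the residual terms with an explicit factor of $4$). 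Your rewriting $H\equiv A(q)+A(q^2)\pmod 2$ discards the divisor-sum structure that makes \eqref{5.3.10} applicable, so finishing along your route would essentially amount to rediscovering that identity in level-$25$ theta-function form. The repair is simple: do not pass to $A(q)+A(q^2)$; instead split $H$ by the residue of the Lambert index modulo $5$ and apply \eqref{5.3.10}.
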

We develop a relation between $\sigma_o\text{mex}(n)$ and $\sigma_e\text{mex}(n)$, and use \eqref{5.1.6}-\eqref{5.1.8} to establish interesting families of infinitely many congruences for $\sigma_o\text{mex}(n)$ and $\sigma_e\text{mex}(n)$ modulo $4$ and $8$. More precisely, we have the following theorem in which we prove Ramanujan-type congruences for $\sigma_o\text{mex}(n)$ and $\sigma_e\text{mex}(n)$ with infinitely many primes involved in each family.
\begin{theorem}\label{thm5.2}
Let $n\geq0$.
\begin{enumerate}
\item For all primes $p\equiv5,7,11\pmod{12}$ and odd integers $1\leq k<p$, we have
\begin{align}\label{5.1.13}
\sigma_e\emph{mex}\left(2p^2n+kp+\frac{p^2-1}{12}\right)\equiv0\pmod{4}.
\end{align}
\item For all primes $p\equiv5,7,11\pmod{24}$, we have
\begin{align}\label{5.1.14}
\sigma_e\emph{mex}\left(4p^2n+kp+\frac{p^2-1}{12}\right)\equiv0\pmod{8},
\end{align}
where $1\leq k<p$ with $k\equiv
\begin{cases}
1\pmod{4}, & \emph{if}\ p\equiv11\pmod{24};\\
3\pmod{4}, & \emph{if}\ p\equiv5,7\pmod{24}.
\end{cases}
$
\item For all primes $p\equiv5,7,11\pmod{24}$, we have
\begin{align}\label{5.1.15}
\sigma_o\emph{mex}\left(4p^2n+kp+\frac{p^2-1}{12}\right)\equiv0\pmod{4},
\end{align}
where $1\leq k<p$ with $k\equiv
\begin{cases}
0\pmod{4}, & \emph{if}\ p\equiv5,11\pmod{24};\\
2\pmod{4}, & \emph{if}\ p\equiv7\pmod{24}.
\end{cases}
$
\end{enumerate}
\end{theorem}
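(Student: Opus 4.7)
I would first subtract \eqref{5.1.5} from \eqref{5.1.4} to obtain the algebraic identity $\sigma_o\text{mex}(n)-\sigma_e\text{mex}(n)=[q^n]f_1^2$, and then prove the following \emph{vanishing lemma}: if $12n+1=pL$ with $p\ge 5$ prime, $p\equiv 5,7,11\pmod{12}$, and $\gcd(p,L)=1$, then $[q^n]f_1^2=0$. The key tool is Ramanujan's identity $f_1^2=f_2\,\phi(-q)$, where $\phi(-q)=\sum_{t\in\mathbb{Z}}(-1)^tq^{t^2}$, which, combined with Euler's pentagonal theorem for $f_2$, yields the representation formula $[q^n]f_1^2=\sum(-1)^t\chi_{12}(s)$, the sum ranging over $s,t$ with $s>0$, $\gcd(s,6)=1$, $t\equiv n\pmod{2}$, and $s^2+12t^2=12n+1$, where $\chi_{12}$ denotes the Kronecker symbol modulo $12$.

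To prove the vanishing I split into two cases. If $p\equiv 5,11\pmod{12}$ then $\bigl(\tfrac{-3}{p}\bigr)=-1$, so any solution of $s^2+12t^2\equiv 0\pmod{p}$ requires $p\mid s$ and $p\mid t$, forcing $p^2\mid 12n+1$ and contradicting $v_p(12n+1)=1$; the set of representations is empty. If $p\equiv 7\pmod{12}$ the argument is more delicate: representations do exist, but I would invoke the genus theory of primitive binary quadratic forms of discriminant $-48$, which has class number two with $x^2+12y^2$ principal and $3x^2+4y^2$ non-principal. Our $p$ lies in the non-principal class, so the representations of $pL$ by $x^2+12y^2$ split into two families coming from conjugate ideal classes, and the values of $\chi_{12}(s)$ on these two families have opposite signs, so the partial character sums cancel.

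With the vanishing lemma in hand, parts (1) and (2) become routine. In every case a direct computation gives $12n+1=p(12Apn_0+12k+p)$ (with $A=2,4$) whose second factor is coprime to $p$ because $p\ge 5$ and $1\le k<p$; hence the lemma yields $\sigma_o\text{mex}(n)=\sigma_e\text{mex}(n)$. Since $(p^2-1)/12$ is even for every prime $p>3$, for part (1) the hypothesis that $k$ is odd makes $n$ itself odd, and \eqref{5.1.6} gives $\sigma_e\text{mex}(n)\equiv 0\pmod{4}$. For part (2) a case-by-case check in each class $p\pmod{24}$ shows that $(p^2-1)/12\pmod{4}$ (which equals $2$ when $p\equiv 5,11\pmod{24}$ and $0$ when $p\equiv 7\pmod{24}$) together with $kp\pmod{4}$ forces $n\equiv 1\pmod{4}$, whence \eqref{5.1.7} gives $\sigma_e\text{mex}(n)\equiv 0\pmod{8}$.

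Part (3) is the hardest, because the same residue calculation instead puts $n\equiv 2\pmod{4}$, a case not covered by \eqref{5.1.6}--\eqref{5.1.8}. Here I would rewrite the claim via the equality $2\sigma_o\text{mex}(n)=[q^n](f_2^2/f_1^2)$ (which follows from $\sigma_o\text{mex}(n)=\sigma_e\text{mex}(n)$) and aim to show the right-hand side is $\equiv 0\pmod{8}$. Writing $f_2^2/f_1^2=f_2/\phi(-q)$ and expanding $\phi(-q)^{-1}\equiv 1-2A+4A^2\pmod{8}$ with $A(q)=\sum_{t\ge 1}(-1)^tq^{t^2}$ reduces the problem to a congruence on the three coefficients $[q^n]f_2$, $[q^n](f_2A)$, and $[q^n](f_2A^2)$, each again a character sum over representations of $12n+1$ by quadratic forms closely related to $s^2+12t^2$; the genus-theoretic cancellation of the vanishing lemma propagates to these higher theta convolutions. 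The principal obstacle throughout is this cancellation in the case $p\equiv 7\pmod{12}$: the naive quadratic-residue argument fails because $-12$ is a residue modulo $p$, and one must exploit the full two-class structure of discriminant $-48$ to pair the representations so that their Kronecker symbols are opposite.
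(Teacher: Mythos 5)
Your reduction is exactly the paper's: subtracting \eqref{5.1.5} from \eqref{5.1.4} gives $\sigma_o\text{mex}(n)-\sigma_e\text{mex}(n)=a(n)$, where $f_1^2=\sum a(n)q^n$; the vanishing of $a(n)$ when $p$ exactly divides $12n+1$ yields the equality \eqref{5.4.4} of $\sigma_o\text{mex}$ and $\sigma_e\text{mex}$ on the progressions, and \eqref{5.1.6}, \eqref{5.1.7} are transferred across. Your residue checks for parts (1) and (2) agree with the paper's and are correct. The one difference here is that the paper simply cites Cooper--Hirschhorn--Lewis (Lemma \ref{lemma5.4.1}) for the vanishing, while you reprove it from $f_1^2=f_2\,\varphi(-q)$. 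Your inert case $p\equiv5,11\pmod{12}$ is complete. Your split case $p\equiv7\pmod{12}$ is right in outline, but the stated mechanism is off: the class group of discriminant $-48$ has order two, so every class is ambiguous and ideal conjugation preserves classes; the two families are $\mathfrak{p}\mathfrak{b}$ and $\overline{\mathfrak{p}}\mathfrak{b}$ for the two primes above $p$, and the claimed sign reversal of $\chi_{12}(s)$ under this pairing still has to be proved --- it amounts to the fact that the underlying weight-one character $\xi$ satisfies $\xi(\mathfrak{p})\xi(\overline{\mathfrak{p}})=\left(\frac{-1}{p}\right)=-1$, whence $\xi(\mathfrak{p})+\xi(\overline{\mathfrak{p}})=0$. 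That can be carried out, or one can simply cite \cite{Cooper} as the paper does.

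Part (3) is where the genuine gap lies --- and also where your computation exposes a defect in the paper itself. You are right that $(p^2-1)/12\equiv2\pmod4$ for $p\equiv5,11\pmod{24}$ and $\equiv0\pmod4$ for $p\equiv7\pmod{24}$, so under the $k$-conditions stated in the theorem the argument $4p^2n+kp+\frac{p^2-1}{12}$ is always $\equiv2\pmod4$ (e.g.\ $p=5$, $k=4$ gives $100n+22$). The paper's proof of (3) asserts this argument is $\equiv0\pmod4$, which is false under the stated conditions; the $k$-conditions are evidently swapped between the residue classes, and with $k\equiv2\pmod4$ for $p\equiv5,11\pmod{24}$ and $k\equiv0\pmod4$ for $p\equiv7\pmod{24}$ the argument is $\equiv0\pmod4$, \eqref{5.1.8} applies to $\sigma_e\text{mex}$, and \eqref{5.4.4} transfers it to $\sigma_o\text{mex}$ --- that single transfer is the paper's entire proof of (3). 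Your alternative attack on the literal statement does not close: in $f_2(1-2A+4A^2)\bmod 8$, the terms $[q^n]f_2$ and $[q^n](f_2A)$ do vanish by your discriminant $-48$ analysis (both are supported on $s^2+12t^2=12n+1$), but after pairing the terms with $t_1\neq t_2$ the surviving contribution $4[q^n](f_2A^2)$ is controlled by the parity of the number of representations $12n+1=(6m+1)^2+24t^2$, a discriminant $-96$ question (class number four) about which your $-48$ cancellation says nothing; ``the genus-theoretic cancellation propagates'' is an assertion, not a proof. So part (3) of your proposal is unproven as written. Numerically the literal statement may well still be true --- e.g.\ $\sigma_o\text{mex}(18)=480$ and $\sigma_o\text{mex}(22)=1304$, both $\equiv0\pmod4$ --- but neither your sketch nor the paper's argument establishes it; what the paper's method actually proves is the version with the $k$-conditions interchanged.
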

\par In addition to the study of Ramanujan-type congruences, we also study the distribution of $\sigma_o\text{mex}(n)$ and $\sigma_e\text{mex}(n)$ modulo arbitrary powers of $2$. Given an integral power series $F(q):=\sum_{n=0}^{\infty}a(n)q^n$ and $0\leq r<M$, we define
\begin{align*}
\delta_r(F, M; X):=\frac{\#\{n\leq X: a(n)\equiv r \pmod{M}\}}{X}.
\end{align*}
An integral power series $F$ is called \textit{lacunary modulo $M$} if 
\begin{align*}
\lim _{X\rightarrow \infty}\delta_0(F, M; X)=1,
\end{align*}
that is, ``almost all" of the coefficients of $F$ are divisible by $M$. 
\par In this article, we study lacunarity of $G_o$ and $G_e$ modulo arbitrary powers of $2$. More precisely, we prove that for any positive integer $k$, $\sigma_o\text{mex}(n)$ and $\sigma_e\text{mex}(n)$ are divisible by $2^k$ for almost all $n$.
\begin{theorem} \label{thm5.3}
For any positive integer $k$, the series $G_o(q)=\sum_{n=0}^{\infty}\sigma_o\emph{mex}(n)q^n$ is lacunary modulo $2^k$, that is,
\begin{align*}
\lim _{X\rightarrow \infty}\delta_0(G_o, 2^k; X)=1.
\end{align*}
\end{theorem}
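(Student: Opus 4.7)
The plan is to apply Serre's classical theorem on the lacunarity modulo $M$ of holomorphic integer-weight modular forms on $\Gamma_0(N)$ with integer Fourier coefficients, namely that such forms are lacunary modulo every fixed positive integer $M$. I would show that $G_o(q)$ is congruent modulo $2^k$ to such a modular form on a congruence subgroup $\Gamma_0(N)$ with $N$ a power of $2$, and Serre's theorem then yields the desired density-one divisibility.

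First I would prove by induction on $m$ the $2$-adic congruence
\[
f_1^{2^{m+1}} \equiv f_2^{2^m} \pmod{2^{m+1}}, \quad m \geq 0.
\]
The base case $f_1^2 \equiv f_2 \pmod 2$ follows from $(1-q^n)^2 \equiv 1 - q^{2n} \pmod 2$. The inductive step uses that if $f_1^{2^m} = f_2^{2^{m-1}} + 2^m h$, then squaring gives $f_1^{2^{m+1}} = f_2^{2^m} + 2^{m+1} f_2^{2^{m-1}} h + 2^{2m} h^2 \equiv f_2^{2^m} \pmod{2^{m+1}}$.

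Starting from $2 G_o = f_2^2/f_1^2 + f_1^2$, multiplying by $f_1^2$ and using $f_1^4 = f_2^2 + 4 B(q)$ with $B \in \mathbb{Z}[[q]]$ (a consequence of the case $m=1$), one obtains $G_o \, f_1^2 = f_2^2 + 2B$. Multiplying this identity by $f_1^{2^{k+2}-2}$ and invoking the $2$-adic congruence yields
\[
G_o(q) \cdot f_2^{2^{k+1}} \equiv f_1^{2^{k+2}-2}\bigl(f_2^2 + 2 B(q)\bigr) \pmod{2^{k+2}}.
\]
After multiplication by a suitable integer power of $q$ and by an additional $\eta$-quotient factor, chosen so that Ligozat's integrality conditions $\sum_{\delta} \delta \, r_\delta \equiv 0 \pmod{24}$ and $\sum_{\delta} (N/\delta) r_\delta \equiv 0 \pmod{24}$ hold and the order at every cusp of $\Gamma_0(N)$ is non-negative, the right-hand side becomes a holomorphic modular form $F_k \in M_{w_k}(\Gamma_0(N), \chi_k)$ of positive integer weight with integer coefficients. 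Serre's theorem then gives that $F_k$ is lacunary modulo $2^{k+2}$, and since $f_2^{2^{k+1}}$ has constant term $1$ (hence is invertible in $\mathbb{Z}[[q]]$), an inversion argument transfers this lacunarity back to $G_o$ modulo $2^k$.

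The main obstacle I expect is the careful verification of Ligozat's criteria for the constructed $\eta$-quotient on $\Gamma_0(N)$, which requires explicit and delicate choices of the $q$-shift and multiplicative $\eta$-quotient factor; a secondary difficulty is justifying the transfer of lacunarity through the invertible series $f_2^{2^{k+1}}$, which is handled by tracking the support of the coefficients modulo $2^k$ through the convolution with $f_2^{-2^{k+1}}$.
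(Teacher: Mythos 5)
Your overall strategy (normalize to eta-quotients and invoke Serre's density theorem) is the same as the paper's, but your argument has a fatal gap at the last step: lacunarity modulo a power of $2$ does \emph{not} transfer through multiplication by an invertible element of $\mathbb{Z}[[q]]$. From the statement that almost all coefficients of $G_o(q)\cdot f_2^{2^{k+1}}$ vanish modulo $2^{k+2}$ you cannot deduce anything about the density of $n$ with $\sigma_o\text{mex}(n)\equiv 0\pmod{2^k}$, because recovering $G_o$ means convolving with $f_2^{-2^{k+1}}$, and convolution can inflate a density-zero exceptional set to full density. A minimal counterexample: $H=1$ is lacunary modulo $2$ and $F=1-q$ is invertible with constant term $1$, yet $G=HF^{-1}=\sum_{n\geq 0}q^n$ has no coefficient divisible by $2$. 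Your proposed repair of ``tracking the support through the convolution with $f_2^{-2^{k+1}}$'' cannot work: modulo $2$ one has $f_2^{2^{k+1}}\equiv f_{2^{k+2}}$, so $f_2^{-2^{k+1}}$ is congruent to a partition-type generating function $f_{2^{k+2}}^{-1}$ whose nonzero residues are not supported on a sparse set. There is also a secondary flaw: your right-hand side $f_1^{2^{k+2}-2}\bigl(f_2^2+2B\bigr)=\tfrac{1}{2}\bigl(f_1^{2^{k+2}-2}f_2^2+f_1^{2^{k+2}+2}\bigr)$ is a combination of two eta-quotients of \emph{different} weights, $2^{k+1}$ and $2^{k+1}+1$, so no choice of $q$-shift and auxiliary eta-quotient makes it a single form $F_k\in M_{w_k}(\Gamma_0(N),\chi_k)$; this part is repairable (apply Serre to each summand separately, noting each has nonnegative eta-exponents and hence is holomorphic at all cusps), but the inversion step is not.

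The paper sidesteps the trap precisely at the point where you fall into it: instead of clearing the denominator of $A(z)=f_2^2/f_1^2$ by a raw multiplication by a power of $f_1$ (a genuine convolution), it multiplies the normalized series $A^*(z)=\eta^2(24z)/\eta^2(12z)$ by the eta-quotient $\eta^{2^{k+1}}(12z)/\eta^{2^k}(24z)$, which is itself $\equiv 1\pmod{2^{k+1}}$ by the binomial lemma. The resulting eta-quotient $A_k(z)=\eta^{2^{k+1}-2}(12z)/\eta^{2^k-2}(24z)$ therefore agrees with $A^*$ \emph{coefficient by coefficient} modulo $2^{k+1}$, so no inversion is ever needed; one then checks holomorphy at the cusps of $\Gamma_0(288)$ via Ligozat's criterion (this is the delicate computation, since $A_k$ has a negative exponent at $\eta(24z)$), applies Serre to get lacunarity of $A_k$, hence of $A^*$ and $A$, modulo $2^{k+1}$, handles $B(z)=f_1^2$ directly through $\eta^2(12z)\in M_1(\Gamma_0(144),\chi_1)$, and concludes from $2G_o=A+B$ that $G_o$ is lacunary modulo $2^k$. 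If you replace your multiplication by $f_1^{2^{k+2}-2}$ with multiplication by the ratio $f_1^{2^{k+2}}/f_2^{2^{k+1}}\equiv 1\pmod{2^{k+2}}$, your argument collapses into the paper's and becomes correct.
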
  
\begin{theorem} \label{thm5.4}
For any positive integer $k$, the series $G_e(q)=\sum_{n=0}^{\infty}\sigma_e\emph{mex}(n)q^n$ is lacunary modulo $2^k$, that is, 
\begin{align*}
\lim _{X\rightarrow \infty}\delta_0(G_e, 2^k; X)=1.
\end{align*}
\end{theorem}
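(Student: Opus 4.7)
The plan is to combine a $2$-adic identity for powers of $f_1, f_2$ with Serre's classical theorem on the lacunarity modulo $M$ of the Fourier coefficients of a holomorphic modular form (with integer coefficients) on a congruence subgroup of $\mathrm{SL}_2(\mathbb{Z})$. The key arithmetic ingredient is
\[ f_1^{2^a} \equiv f_2^{2^{a-1}} \pmod{2^a}, \qquad a \geq 1, \]
which I will prove by induction on $a$ starting from the term-by-term congruence $(1-q^n)^{2^a} \equiv (1-q^{2n})^{2^{a-1}} \pmod{2^a}$ applied to each factor of $f_1^{2^a}$.

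Given $k \geq 1$, I would take $a = k+1$. Using the identity above together with \eqref{5.1.5}, one computes
\[ 2G_e(q) \;=\; \frac{(f_2^2 - f_1^4)\, f_1^{2^{k+1}-2}}{f_1^{2^{k+1}}} \;\equiv\; \frac{f_1^{2^{k+1}-2}}{f_2^{2^k - 2}} \;-\; \frac{f_1^{2^{k+1}+2}}{f_2^{2^k}} \pmod{2^{k+1}}. \]
Each of the two terms on the right is an integer power series that arises (up to a rational $q$-shift by $q^{-1/12}$) from an eta quotient of integer weight: $\eta(z)^{2^{k+1}-2}/\eta(2z)^{2^k-2}$ of weight $2^{k-1}$, and $\eta(z)^{2^{k+1}+2}/\eta(2z)^{2^k}$ of weight $2^{k-1}+1$. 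I would then multiply each such eta quotient by auxiliary eta products---for instance, suitable powers of $\eta(z)^{2^b}/\eta(2z)^{2^{b-1}}$ (which are congruent to $1$ modulo $2^b$ by the same identity), together with factors of the form $\eta(Nz)^{24m}$ whose $2$-adic residue can be controlled---in order to clear the fractional $q$-exponent, verify Ligozat's modularity conditions, and ensure holomorphicity at every cusp of a congruence subgroup $\Gamma_0(N_k)$. The outcome is a pair of holomorphic modular forms $\Phi_{k,1}, \Phi_{k,2}$ on $\Gamma_0(N_k)$ of positive integer weight, with integer Fourier coefficients, satisfying
\[ \Phi_{k,1}(z) - \Phi_{k,2}(z) \;\equiv\; q^{\alpha_k}\, 2 G_e(q) \pmod{2^{k+1}} \]
for some non-negative integer $\alpha_k$.

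Applying Serre's theorem separately to $\Phi_{k,1}$ and $\Phi_{k,2}$, the $n$-th Fourier coefficient of each is divisible by $2^{k+1}$ for almost all $n$; taking the difference, $2G_e(q)$ is lacunary modulo $2^{k+1}$, and hence $G_e(q)$ is lacunary modulo $2^k$, as claimed.

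The hardest part will be the construction of the auxiliary eta factors and the verification that the resulting forms $\Phi_{k,1}, \Phi_{k,2}$ are genuinely holomorphic modular forms on $\Gamma_0(N_k)$: both of Ligozat's numerical conditions, the square-class condition, and the order conditions at every cusp must be checked simultaneously for both terms, and both the level $N_k$ and the weights grow with $k$ so the construction is not uniform. Once this is in place, the conclusion follows immediately from Serre's theorem.
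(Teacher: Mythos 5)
Your proposal is, at its core, the same argument as the paper's. After the dilation $z\mapsto 12z$, your first term becomes exactly the paper's form $A_k(z)=\eta^{2^{k+1}-2}(12z)/\eta^{2^k-2}(24z)\in M_{2^k}\left(\Gamma_0(288),\chi_0\right)$, and your second term becomes $\eta^2(12z)$ times the unit $\eta^{2^{k+1}}(12z)/\eta^{2^k}(24z)\equiv 1\pmod{2^{k+1}}$ (the paper uses $B^*(z)=\eta^2(12z)\in M_1\left(\Gamma_0(144),\chi_1\right)$ directly, so no unit factor is even needed for that piece). The binomial-lemma units, Ligozat's criteria, Serre's theorem applied to each of the two pieces separately, and the endgame of passing from $2G_e$ modulo $2^{k+1}$ to $G_e$ modulo $2^k$ all coincide with the paper. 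Your worry that the construction is ``not uniform'' is also unfounded in one respect: the levels stay fixed at $288$ and $144$; only the weights grow with $k$.

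The one step that would fail as literally written is your mechanism for clearing the fractional $q$-exponent. Both of your pieces carry the prefactor $q^{1/12}$. Any eta-quotient that is $\equiv 1\pmod{2^{k+1}}$ via the binomial lemma, i.e.\ a product of factors $\eta(\delta z)^{2^{b+1}}/\eta(2\delta z)^{2^b}$, has net fractional exponent $0$, so multiplying by such units can never remove $q^{1/12}$; on the other hand, multiplying by non-unit factors such as $\eta(Nz)^{24m}$ does shift exponents but convolves Fourier coefficients, destroying the coefficientwise congruence with $2G_e$ --- so the route you describe cannot produce forms with $\Phi_{k,1}(z)-\Phi_{k,2}(z)\equiv q^{\alpha_k}\,2G_e(q)\pmod{2^{k+1}}$. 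The standard repair, and what the paper does, is a dilation rather than a multiplication: replace $q$ by $q^{12}$ and multiply by $q$, i.e.\ work with $\sum_{n\geq 0}2\sigma_e\text{mex}(n)\,q^{12n+1}$, which turns your two pieces into honest eta-quotients in $12z$ and $24z$; lacunarity of this re-indexed series modulo $2^{k+1}$ immediately gives lacunarity of $G_e$ modulo $2^k$, since the re-indexing only relocates the coefficients to the arithmetic progression $12n+1$. With that fix, the cusp verifications you deferred do go through at level $288$ (the paper tabulates the cusp orders for the first piece, and a short computation, or simply using $\eta^2(12z)$ instead, handles the second), and the rest of your argument is correct.
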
 
\par 
The study of asymptotic behavior of partition functions has also been an integral part of research in the theory of partitions. Using their most celebrated circle method, Hardy and Ramanujan \cite{Hardy} established an asymptotic formula for the partition function $p(n)$, namely
\begin{align*}
p(n)\sim\frac{1}{4\sqrt{3}n}\exp\left( \pi\sqrt{\frac{2n}{3}}\right)\quad \text{as}\ n\rightarrow\infty.
\end{align*}
Grabner and Knopfmacher \cite{Grabner2006} obtained the Hardy-Ramanujan type asymptotic formula for $\sigma\text{mex}(n)$, though they were working on ``smallest gap" in a partition that has exactly the same meaning as that of minimal excludant. The asymptotic formula for $\sigma\text{mex}(n)$ due to Grabner and Knopfmacher \cite{Grabner2006} is as follows: 
\begin{align}\label{Grab_Knop}
\sigma\text{mex}(n)\sim\frac{1}{4\sqrt[4]{6n^3}}\exp\left( \pi\sqrt{\frac{2n}{3}}\right)\quad \text{as}\ n\rightarrow\infty.
\end{align}
In \cite{Baruah2022}, Baruah et al. asked for the Hardy-Ramanujan type asymptotic formulae for $\sigma_o\text{mex}(n)$ and $\sigma_e\text{mex}(n)$. In this article, we prove that as $n\rightarrow\infty$, $\sigma_o\text{mex}(n)$ and $\sigma_e\text{mex}(n)$ behave same. We use Ingham's Tauberian theorem to derive asymptotic formulae for $\sigma_o\text{mex}(n)$ and $\sigma_e\text{mex}(n)$. More specifically, we have the following result. 
\begin{theorem}\label{thm5.5}
We have
\begin{align*}
\sigma_o\emph{mex}(n)\sim\sigma_e\emph{mex}(n)\sim\frac{1}{8\sqrt[4]{6n^3}}\exp\left( \pi\sqrt{\frac{2n}{3}}\right)
\end{align*}
as $n\rightarrow\infty$.
\end{theorem}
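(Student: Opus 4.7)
My strategy is to apply Ingham's Tauberian theorem to the sum $G_o+G_e = \frac{f_2^2}{f_1^2}$ rather than directly to $G_o$ or $G_e$, whose coefficients may fail to be monotone because of the subtracted piece $f_1^2$, and then to absorb the discrepancy $G_o-G_e = f_1^2$ into an error term via Euler's pentagonal number identity. As a first step, I would establish the behavior of the relevant infinite products as $q = e^{-t} \to 1^-$. Using the modular transformation $\eta(-1/\tau)=\sqrt{-i\tau}\,\eta(\tau)$ of the Dedekind eta function (applied at $\tau = it/(2\pi)$ and at $\tau = it/\pi$), one has
\begin{align*}
f_1 \sim \sqrt{\tfrac{2\pi}{t}}\, e^{-\pi^2/(6t)}, \qquad f_2 \sim \sqrt{\tfrac{\pi}{t}}\, e^{-\pi^2/(12t)} \qquad (t\to 0^+),
\end{align*}
from which $\frac{f_2^2}{f_1^2} \sim \tfrac{1}{2}\, e^{\pi^2/(6t)}$ while $f_1^2 \sim \frac{2\pi}{t}\, e^{-\pi^2/(3t)}$ decays exponentially; in particular $G_o(e^{-t}) \sim G_e(e^{-t}) \sim \tfrac{1}{4}\, e^{\pi^2/(6t)}$.

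Second, I would apply Ingham's theorem to the non-negative series $(-q;q)_\infty^2 = \sum_{n\geq 0} A(n) q^n$, where $A(n) := \sigma_o\text{mex}(n)+\sigma_e\text{mex}(n)$. To verify the required monotonicity, note the convolution formula $A(n) = \sum_{k=0}^{n} Q(k)Q(n-k)$, where $Q(n)$ counts partitions of $n$ into distinct parts; since $Q$ is non-decreasing (by the injection ``raise the largest part by one''), the convolution $A$ is non-decreasing as well. Ingham's theorem with parameters $\lambda=1/2$, $\beta=0$, $\alpha=\pi^2/6$ then yields
\begin{align*}
A(n) \sim \frac{1}{4\sqrt[4]{6n^3}}\, \exp\!\left(\pi\sqrt{\tfrac{2n}{3}}\right).
\end{align*}

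Finally, I would control $B(n) := \sigma_o\text{mex}(n)-\sigma_e\text{mex}(n) = [q^n]\,f_1^2$. Squaring Euler's pentagonal identity $(q;q)_\infty = \sum_{k\in\mathbb{Z}}(-1)^k q^{k(3k-1)/2}$ expresses $B(n)$ as a signed count of representations of $n$ as a sum of two generalized pentagonal numbers, and the sparsity of these numbers gives $|B(n)| = O(\sqrt{n})$. Since $A(n)$ grows exponentially, the identities $\sigma_o\text{mex}(n) = \tfrac12(A(n)+B(n))$ and $\sigma_e\text{mex}(n) = \tfrac12(A(n)-B(n))$ both force the asymptotic $\tfrac12 A(n)$, which is the claim. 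The principal technical hurdle is verifying the hypotheses of Ingham's theorem with sufficient precision---namely the monotonicity of $A(n)$ together with the exact leading constant in the expansion of $f_2^2/f_1^2$ near $q=1$---since the $\tfrac12$ factor in \eqref{5.1.4}--\eqref{5.1.5} otherwise makes a direct Tauberian approach to $G_o$ or $G_e$ delicate.
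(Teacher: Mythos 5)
Your proposal is correct, and it takes a genuinely different route from the paper. The paper applies Ingham's theorem \emph{twice}, directly to $G_o$ and $G_e$: it first proves combinatorially (Lemma \ref{lemma5.6.3}, via mex-preserving injections $\mathcal{P}_o(n)\to\mathcal{P}_o(n+1)$ and $\mathcal{P}_e(n)\to\mathcal{P}_e(n+1)$) that $\{\sigma_o\text{mex}(n)\}$ and $\{\sigma_e\text{mex}(n)\}$ are each weakly increasing, and then disposes of the $\pm f_1^2$ term inside $G_o(e^{-y})$, $G_e(e^{-y})$ analytically, invoking the Berndt--Kim partial theta asymptotics (Lemma \ref{lemma_Berndt_Kim}) just to show $\lim_{y\to 0^+}(e^{-y};e^{-y})_\infty^2=0$. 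You instead apply Ingham \emph{once}, to the sum $A(n)=\sigma_o\text{mex}(n)+\sigma_e\text{mex}(n)$ with generating function $(-q;q)_\infty^2=f_2^2/f_1^2$, where monotonicity is essentially free: $A(n)=\sum_{k}Q(k)Q(n-k)$ is a convolution of the nonnegative nondecreasing sequence $Q$ with itself, so $A(n+1)\geq A(n)$ term by term. You then move the comparison of $\sigma_o\text{mex}$ and $\sigma_e\text{mex}$ to the coefficient side: squaring the pentagonal number theorem bounds $B(n)=[q^n]f_1^2$ by the number of representations of $n$ as a sum of two generalized pentagonal numbers, giving $|B(n)|=O(\sqrt{n})$, which is negligible against the exponentially large $A(n)$. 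All steps check out: your eta-transformation asymptotics $f_2^2/f_1^2\sim\tfrac12 e^{\pi^2/(6t)}$ and the Ingham parameters (in the paper's notation, $\mu=1/2$, $\nu=0$, $\lambda=\pi^2/6$) reproduce $A(n)\sim\frac{1}{4\sqrt[4]{6n^3}}\exp\left(\pi\sqrt{\tfrac{2n}{3}}\right)$, consistent with \eqref{Grab_Knop}, and halving yields the theorem. As for what each approach buys: yours dispenses with both the combinatorial monotonicity lemma and the Berndt--Kim lemma (your exact exponential decay $f_1^2(e^{-t})\sim\tfrac{2\pi}{t}e^{-\pi^2/(3t)}$ is stronger than the limit-zero statement the paper needs), and as a bonus it delivers the sharper quantitative conclusion $\sigma_o\text{mex}(n)-\sigma_e\text{mex}(n)=O(\sqrt{n})$, which the paper's argument does not give; the paper's route, in exchange, establishes the weak monotonicity of each individual sequence, a fact of independent combinatorial interest, and verifies Ingham's hypotheses for $G_o$ and $G_e$ themselves. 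One cosmetic remark: your stated worry that the coefficients of $G_o$ or $G_e$ ``may fail to be monotone'' is unfounded---the paper's Lemma \ref{lemma5.6.3} shows both are weakly increasing---but since your argument never relies on this, it does not affect correctness.
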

We see that Theorem \ref{thm5.5} immediately implies \eqref{Grab_Knop} since
$$
\sigma\text{mex}(n)=\sigma_o\text{mex}(n)+\sigma_e\text{mex}(n)\sim\frac{1}{4\sqrt[4]{6n^3}}\exp\left( \pi\sqrt{\frac{2n}{3}}\right)
$$
as $n\rightarrow\infty$.
\section{Preliminaries}
We recall some definitions and basic facts on modular forms. For more details, see for example \cite{koblitz1993, ono2004}. We first define the matrix groups 
\begin{align*}
\text{SL}_2(\mathbb{Z}) & :=\left\{\begin{bmatrix}
a  &  b \\
c  &  d      
\end{bmatrix}: a, b, c, d \in \mathbb{Z}, ad-bc=1
\right\},\\
\Gamma_{0}(N) & :=\left\{
\begin{bmatrix}
a  &  b \\
c  &  d      
\end{bmatrix} \in \text{SL}_2(\mathbb{Z}) : c\equiv 0\pmod N \right\},
\end{align*}
\begin{align*}
\Gamma_{1}(N) & :=\left\{
\begin{bmatrix}
a  &  b \\
c  &  d      
\end{bmatrix} \in \Gamma_0(N) : a\equiv d\equiv 1\pmod N \right\},
\end{align*}
and 
\begin{align*}\Gamma(N) & :=\left\{
\begin{bmatrix}
a  &  b \\
c  &  d      
\end{bmatrix} \in \text{SL}_2(\mathbb{Z}) : a\equiv d\equiv 1\pmod N, ~\text{and}~ b\equiv c\equiv 0\pmod N\right\},
\end{align*}
where $N$ is a positive integer. A subgroup $\Gamma$ of $\text{SL}_2(\mathbb{Z})$ is called a congruence subgroup if $\Gamma(N)\subseteq \Gamma$ for some $N$. The smallest $N$ such that $\Gamma(N)\subseteq \Gamma$
is called the level of $\Gamma$. For example, $\Gamma_0(N)$ and $\Gamma_1(N)$
are congruence subgroups of level $N$. 
\par Let $\mathbb{H}:=\{z\in \mathbb{C}: \text{Im}(z)>0\}$ be the upper half of the complex plane. The group $$\text{GL}_2^{+}(\mathbb{R})=\left\{\begin{bmatrix}
a  &  b \\
c  &  d      
\end{bmatrix}: a, b, c, d\in \mathbb{R}~\text{and}~ad-bc>0\right\}$$ acts on $\mathbb{H}$ by $\begin{bmatrix}
a  &  b \\
c  &  d      
\end{bmatrix} z=\displaystyle \frac{az+b}{cz+d}$.  
We identify $\infty$ with $\displaystyle\frac{1}{0}$ and define $\begin{bmatrix}
a  &  b \\
c  &  d      
\end{bmatrix} \displaystyle\frac{r}{s}=\displaystyle \frac{ar+bs}{cr+ds}$, where $\displaystyle\frac{r}{s}\in \mathbb{Q}\cup\{\infty\}$.
This gives an action of $\text{GL}_2^{+}(\mathbb{R})$ on the extended upper half-plane $\mathbb{H}^{\ast}=\mathbb{H}\cup\mathbb{Q}\cup\{\infty\}$. 
Suppose that $\Gamma$ is a congruence subgroup of $\text{SL}_2(\mathbb{Z})$. A cusp of $\Gamma$ is an equivalence class in $\mathbb{P}^1=\mathbb{Q}\cup\{\infty\}$ under the action of $\Gamma$.
\par The group $\text{GL}_2^{+}(\mathbb{R})$ also acts on functions $f: \mathbb{H}\rightarrow \mathbb{C}$. In particular, suppose that $\gamma=\begin{bmatrix}
a  &  b \\
c  &  d      
\end{bmatrix}\in \text{GL}_2^{+}(\mathbb{R})$. If $f(z)$ is a meromorphic function on $\mathbb{H}$ and $\ell$ is an integer, then define the slash operator $|_{\ell}$ by 
$$(f|_{\ell}\gamma)(z):=(\text{det}~{\gamma})^{\ell/2}(cz+d)^{-\ell}f(\gamma z).$$
\begin{definition}
	Let $\Gamma$ be a congruence subgroup of level $N$. A holomorphic function $f: \mathbb{H}\rightarrow \mathbb{C}$ is called a modular form with integer weight $\ell$ on $\Gamma$ if the following hold:
	\begin{enumerate}
		\item We have $$f\left(\displaystyle \frac{az+b}{cz+d}\right)=(cz+d)^{\ell}f(z)$$ for all $z\in \mathbb{H}$ and all $\begin{bmatrix}
		a  &  b \\
		c  &  d      
		\end{bmatrix} \in \Gamma$.
		\item If $\gamma\in \text{SL}_2(\mathbb{Z})$, then $(f|_{\ell}\gamma)(z)$ has a Fourier expansion of the form $$(f|_{\ell}\gamma)(z)=\displaystyle\sum_{n\geq 0}a_{\gamma}(n)q_N^n,$$
		where $q_N:=e^{2\pi iz/N}$.
	\end{enumerate}
\end{definition}
For a positive integer $\ell$, the complex vector space of modular forms of weight $\ell$ with respect to a congruence subgroup $\Gamma$ is denoted by $M_{\ell}(\Gamma)$.
\begin{definition}\cite[Definition 1.15]{ono2004}
	If $\chi$ is a Dirichlet character modulo $N$, then we say that a modular form $f\in M_{\ell}(\Gamma_1(N))$ has Nebentypus character $\chi$ if
	$$f\left( \frac{az+b}{cz+d}\right)=\chi(d)(cz+d)^{\ell}f(z)$$ for all $z\in \mathbb{H}$ and all $\begin{bmatrix}
	a  &  b \\
	c  &  d      
	\end{bmatrix} \in \Gamma_0(N)$. The space of such modular forms (resp. cusp forms) is denoted by $M_{\ell}(\Gamma_0(N), \chi)$. 
\end{definition}
In this paper, the relevant modular forms are those that arise from eta-quotients. Recall that the Dedekind eta-function $\eta(z)$ is defined by
\begin{align*}
\eta(z):=q^{1/24}\prod_{n=1}^{\infty}(1-q^n),
\end{align*}
where $q:=e^{2\pi iz}$ and $z\in \mathbb{H}$. It satisfies the following modular transformation property \cite[Theorem 1.61]{ono2004}: For $z\in\mathbb{H}$,
\begin{align}\label{eta_fn_transf}
\eta(-1/z)=(-iz)^{1/2}\eta(z).
\end{align}
A function $f(z)$ is called an eta-quotient if it is of the form
\begin{align*}
f(z)=\prod_{\delta\mid N}\eta(\delta z)^{r_\delta},
\end{align*}
where $N$ is a positive integer and each $r_{\delta}$ is an integer. We now recall two theorems from \cite[p. 18]{ono2004} which are very useful in checking modularity of eta-quotients.
\begin{theorem}\cite[Theorem 1.64]{ono2004}\label{thm_ono1} If $f(z)=\prod_{\delta\mid N}\eta(\delta z)^{r_\delta}$ 
	is an eta-quotient such that $\ell=\frac{1}{2}\sum_{\delta\mid N}r_{\delta}\in \mathbb{Z}$, 
	$$\sum_{\delta\mid N} \delta r_{\delta}\equiv 0 \pmod{24}$$ and
	$$\sum_{\delta\mid N} \frac{N}{\delta}r_{\delta}\equiv 0 \pmod{24},$$
	then $f(z)$ satisfies $$f\left( \frac{az+b}{cz+d}\right)=\chi(d)(cz+d)^{\ell}f(z)$$
	for every  $\begin{bmatrix}
	a  &  b \\
	c  &  d      
	\end{bmatrix} \in \Gamma_0(N)$. Here the character $\chi$ is defined by $\chi(d):=\left(\frac{(-1)^{\ell} s}{d}\right)$, where $s:= \prod_{\delta\mid N}\delta^{r_{\delta}}$. 
\end{theorem}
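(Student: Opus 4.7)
The plan is to reduce the stated transformation law to the classical multiplier system of $\eta$ on $\mathrm{SL}_2(\mathbb{Z})$, via a divisor trick that turns $\eta(\delta\gamma z)$ into an $\mathrm{SL}_2(\mathbb{Z})$-transform of $\eta$ evaluated at $\delta z$. For any $\gamma = \begin{bmatrix} a & b \\ c & d \end{bmatrix}\in\Gamma_0(N)$ and $\delta\mid N$, I form
$$\gamma_\delta := \begin{bmatrix} a & b\delta \\ c/\delta & d \end{bmatrix}.$$
Because $\delta\mid N\mid c$, the matrix $\gamma_\delta$ has integer entries and determinant $1$, and a direct computation gives $\gamma_\delta\cdot(\delta z) = \delta\cdot(\gamma z)$. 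Hence $\eta(\delta\gamma z) = \eta(\gamma_\delta\cdot\delta z)$, so the problem is reduced to the action of a single $\mathrm{SL}_2(\mathbb{Z})$-element on $\eta$ at the point $\delta z$.

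Next, I invoke the classical transformation formula
$$\eta(\gamma' z) \;=\; v_\eta(\gamma')\,(c'z+d')^{1/2}\eta(z),\qquad \gamma'=\begin{bmatrix}a'&b'\\c'&d'\end{bmatrix}\in\mathrm{SL}_2(\mathbb{Z}),$$
where $v_\eta(\gamma')$ is an explicit $24$-th root of unity (expressible through Dedekind sums, or equivalently as a Jacobi symbol times a rational exponential in the entries of $\gamma'$). Applying this to each $\gamma_\delta$ at the argument $\delta z$, and using $(c/\delta)(\delta z)+d = cz+d$, I raise to the power $r_\delta$ and multiply over $\delta\mid N$ to get
$$f(\gamma z) \;=\; \Bigg(\prod_{\delta\mid N} v_\eta(\gamma_\delta)^{r_\delta}\Bigg)(cz+d)^{\ell}\,f(z),$$
because the square-root factors combine to $(cz+d)^{\ell}$ by the integrality hypothesis $\ell=\tfrac{1}{2}\sum_\delta r_\delta\in\mathbb{Z}$.

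The real work is then to identify the multiplier $\prod_\delta v_\eta(\gamma_\delta)^{r_\delta}$ with the character value $\chi(d)=\left(\tfrac{(-1)^\ell s}{d}\right)$, and this is where the two congruence hypotheses are used. Splitting according to the parity of $c/\delta$, one writes $v_\eta(\gamma_\delta)$ as a Jacobi symbol in $d$ versus $c/\delta$ multiplied by $\exp\!\left(\tfrac{\pi i}{12}\,\Phi(\gamma_\delta)\right)$ for an explicit polynomial $\Phi$ in the entries of $\gamma_\delta$. The resulting aggregate phase is a $\tfrac{\pi i}{12}$-multiple of an integer combination of $\sum_{\delta\mid N}\delta\, r_\delta$ and $\sum_{\delta\mid N}(c/\delta)\, r_\delta$; writing $c=Nc_0$ turns the second sum into $c_0\sum_\delta(N/\delta)r_\delta$. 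The hypotheses $\sum_\delta\delta\, r_\delta\equiv 0\pmod{24}$ and $\sum_\delta(N/\delta)\,r_\delta\equiv 0\pmod{24}$ are precisely what is needed to make the total phase an integer multiple of $2\pi i$, so the exponential collapses to $1$. What is left is a product of Jacobi symbols which, by multiplicativity and quadratic reciprocity together with the sign bookkeeping tied to $\ell=\tfrac{1}{2}\sum_\delta r_\delta$, simplifies to $\left(\tfrac{(-1)^\ell s}{d}\right)$ with $s=\prod_{\delta\mid N}\delta^{r_\delta}$, as required.

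The principal obstacle is the case analysis in the last step: one must handle the parities of $c/\delta$ (equivalently of $d$) separately since $v_\eta$ takes genuinely different explicit forms in those regimes, and verify in each case that the two divisibility conditions on $\{r_\delta\}$ are exactly what is required to kill the non-Jacobi phase and recover $\chi(d)$. This parity bookkeeping—standardly done with Rademacher's or Knopp's formulas for the eta multiplier—carries essentially all of the analytic content of the theorem, while the divisor trick in the first step is what allows a result about $\Gamma_0(N)$ to be obtained from a transformation law on $\mathrm{SL}_2(\mathbb{Z})$.
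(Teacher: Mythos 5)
Your outline is correct and is essentially the same argument that stands behind the paper's statement: the paper itself gives no proof (the theorem is quoted verbatim from \cite[Theorem 1.64]{ono2004}, where it rests on the classical computation of Gordon--Hughes and Newman), and your reduction via $\gamma_\delta=\begin{bmatrix} a & b\delta \\ c/\delta & d \end{bmatrix}$ with $\gamma_\delta(\delta z)=\delta\gamma z$, followed by the explicit eta-multiplier and the observation that every $\tfrac{\pi i}{12}$-phase term is an integer combination of $\sum_{\delta\mid N}\delta r_\delta$ and $c_0\sum_{\delta\mid N}(N/\delta)r_\delta$ (indeed $b\delta d\bigl((c/\delta)^2-1\bigr)=bdc\,(c/\delta)-bd\,\delta$), is exactly how that proof runs. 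Two small corrections to your bookkeeping: the case split should be on the parity of $d$ rather than of $c/\delta$ (the latter varies with $\delta$ for a fixed $\gamma$, whereas if $d$ is even then $c$, hence every $c/\delta$, is odd, so splitting on $d$ is uniform across the product; one also handles $c\le 0$ by passing to $-\gamma$, the case $c=0$ being immediate from the first congruence via the $q$-expansion), and the phase does not collapse entirely to $1$ after invoking the two congruences modulo $24$ --- a residual sign such as $(-1)^{\ell(d-1)/2}=\left(\frac{-1}{d}\right)^{\ell}$ survives from the term $3(d-1)\sum_{\delta\mid N}r_\delta=6\ell(d-1)$, and this residue is precisely the source of the factor $(-1)^{\ell}$ in $\chi(d)=\left(\frac{(-1)^{\ell}s}{d}\right)$, consistent with, but sharper than, your ``sign bookkeeping'' remark.
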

Suppose that $f$ is an eta-quotient satisfying the conditions of Theorem \ref{thm_ono1} and that the associated weight $\ell$ is a positive integer. If $f(z)$ is holomorphic at all of the cusps of $\Gamma_0(N)$, then $f(z)\in M_{\ell}(\Gamma_0(N), \chi)$. The following theorem due to Ligozat gives the necessary criterion for determining orders of an eta-quotient at cusps.
\begin{theorem}\cite[Theorem 1.65]{ono2004}\label{thm_ono2}
	Let $c, d$ and $N$ be positive integers with $d\mid N$ and $\gcd(c, d)=1$. If $f$ is an eta-quotient satisfying the conditions of Theorem \ref{thm_ono1} for $N$, then the 
	order of vanishing of $f(z)$ at the cusp $\frac{c}{d}$ 
	is $$\frac{N}{24}\sum_{\delta\mid N}\frac{\gcd(d,\delta)^2r_{\delta}}{\gcd(d,\frac{N}{d})d\delta}.$$
\end{theorem}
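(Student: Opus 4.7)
The plan is to reduce the computation to a single eta-factor, since the order of vanishing at any cusp is additive over products and powers: if $f(z) = \prod_{\delta \mid N} \eta(\delta z)^{r_\delta}$, then
$$
\mathrm{ord}_{c/d}(f) = \sum_{\delta \mid N} r_\delta \cdot \mathrm{ord}_{c/d}\bigl(\eta(\delta z)\bigr).
$$
Thus the main task is to prove that $\mathrm{ord}_{c/d}\bigl(\eta(\delta z)\bigr) = \frac{N}{24}\cdot \frac{\gcd(d,\delta)^2}{\gcd(d,N/d)\,d\,\delta}$ for each $\delta \mid N$; summing over $\delta$ with weights $r_\delta$ then yields the stated formula.

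First I would fix a matrix $\gamma = \begin{pmatrix} a & b \\ c & d \end{pmatrix}\in \mathrm{SL}_2(\mathbb{Z})$ with bottom row $(c,d)$ (possible because $\gcd(c,d)=1$), so that $\gamma\infty = c/d$. The local uniformizer at the cusp $c/d$ of $\Gamma_0(N)$ is $q_h := e^{2\pi i w/h}$ where $h = N/\bigl(d\gcd(d,N/d)\bigr)$ is the width, and the order of $\eta(\delta z)$ at $c/d$ is by definition the leading $q_h$-exponent of $\eta(\delta\gamma w)$.

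Second comes the key technical step. Let $g = \gcd(d,\delta)$. The matrix $\delta\gamma = \begin{pmatrix} \delta a & \delta b \\ c & d \end{pmatrix}$ has determinant $\delta$, and I would use the left action of $\mathrm{SL}_2(\mathbb{Z})$ on integer matrices to produce a factorization $\delta\gamma = \gamma' T$ with $\gamma' = \begin{pmatrix} a' & b' \\ c' & d' \end{pmatrix}\in \mathrm{SL}_2(\mathbb{Z})$ and $T = \begin{pmatrix} g & \beta \\ 0 & \delta/g \end{pmatrix}$ upper triangular. The existence of such a factorization with the diagonal entries $(g,\delta/g)$ follows from Hermite normal form applied to the left-coset space $\mathrm{SL}_2(\mathbb{Z})\backslash M_2(\mathbb{Z})_{\det = \delta}$, together with the identity $\gcd\bigl(\text{entries of the bottom row of }\delta\gamma\bigr) = \gcd(c,d) = 1$, which forces the $(2,2)$-entry of $T$ to be exactly $\delta/g$ after clearing the $(2,1)$-entry by Bezout using $\gcd(d,\delta)=g$.

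With this factorization in hand, I would apply the transformation formula for the Dedekind eta under $\mathrm{SL}_2(\mathbb{Z})$, namely $\eta(\gamma' u) = \varepsilon(\gamma')(c' u + d')^{1/2}\eta(u)$ with $\varepsilon$ a 24th root of unity, to obtain
$$
\eta(\delta\gamma w) = \varepsilon(\gamma')\bigl(c'Tw + d'\bigr)^{1/2}\,\eta(Tw) = \varepsilon(\gamma')\bigl(c'Tw+d'\bigr)^{1/2}\,\eta\!\left(\frac{gw+\beta}{\delta/g}\right).
$$
Plugging in $\eta(u) = e^{2\pi i u/24}\prod_{n\ge 1}(1-e^{2\pi i n u})$ with $u = (gw+\beta)/(\delta/g)$, the leading exponent in $q = e^{2\pi i w}$ is $g^2/(24\delta)$, and the product factor is a unit near $\infty$. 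Converting to the local parameter $q_h = q^{1/h}$ gives order $h\,g^2/(24\delta)$, which equals $\tfrac{N}{24}\cdot\tfrac{g^2}{\gcd(d,N/d)\,d\,\delta}$. Summing over $\delta\mid N$ with multiplicities $r_\delta$ yields the claimed formula.

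The main obstacle is the matrix factorization step: verifying that the Hermite normal form of $\delta\gamma$ under left $\mathrm{SL}_2(\mathbb{Z})$-action has diagonal $(g,\delta/g)$ with $g=\gcd(d,\delta)$ requires careful book-keeping of which divisor of $\delta$ the $\gcd$ of each column contributes, using the coprimality $\gcd(c,d)=1$ and $\gcd(a,c)=1$ (the latter from $ad-bc=1$) to reduce $\gcd(\delta a, c) = \gcd(\delta,c)$ and similar identities. All subsequent steps are routine once this factorization is in place.
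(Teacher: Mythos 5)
The paper offers no proof of this statement to compare against: it is Ligozat's criterion, quoted verbatim from Ono's book \cite[Theorem 1.65]{ono2004}, so your attempt can only be judged on its own terms. Your outline is the standard proof of Ligozat's theorem and its skeleton is sound: additivity of cusp orders over the eta-factors, a factorization $\delta\gamma=\gamma' T$ with $\gamma'\in\mathrm{SL}_2(\mathbb{Z})$ and $T$ upper triangular of determinant $\delta$, the eta transformation law, and conversion to the local parameter via the width $h=N/\gcd(d^2,N)=N/\bigl(d\gcd(d,N/d)\bigr)$, which indeed yields $h g^2/(24\delta)=\frac{N}{24}\cdot\frac{g^2}{\gcd(d,N/d)\,d\,\delta}$.

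There is, however, a concrete error in your matrix conventions that, as written, breaks the key gcd step. If $\gamma=\begin{pmatrix} a & b\\ c & d\end{pmatrix}$ has bottom row $(c,d)$, then $\gamma\infty=a/c$, \emph{not} $c/d$; to uniformize the cusp $c/d$ you must take $\gamma$ with \emph{first column} $(c,d)^{T}$. This matters because the invariant that pins down the diagonal of the Hermite form is the content of the first column: left multiplication by a unimodular matrix preserves the gcd of the entries of each \emph{column} (not of the bottom row). With your matrix $\delta\gamma=\begin{pmatrix}\delta a & \delta b\\ c & d\end{pmatrix}$, the first column $(\delta a, c)^{T}$ has content $\gcd(\delta a,c)=\gcd(\delta,c)$ --- the very identity you invoke --- so the Hermite diagonal is forced to be $\bigl(\gcd(\delta,c),\,\delta/\gcd(\delta,c)\bigr)$, contradicting your claim that it is $(g,\delta/g)$ with $g=\gcd(d,\delta)$; carried through, your computation would produce a formula depending on the numerator $c$ of the cusp, which cannot be right since the order is constant on $\Gamma_0(N)$-orbits and the stated formula depends only on $d$. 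The repair is immediate: with the correct $\gamma$, one gets $\delta\gamma=\begin{pmatrix}\delta c & \ast\\ d & \ast\end{pmatrix}$ whose first column has content $\gcd(\delta c,d)=\gcd(\delta,d)=g$ (using $\gcd(c,d)=1$), so $T=\begin{pmatrix} g & \beta\\ 0 & \delta/g\end{pmatrix}$ with $\beta\in\mathbb{Z}$, and then your leading-term computation $\eta(Tw)=e^{2\pi i(g^2w+g\beta)/(24\delta)}\cdot(\text{unit as } w\to i\infty)$ goes through verbatim, the automorphy factor $(c'Tw+d')^{1/2}$ being nonvanishing and irrelevant to the order. With that single correction your argument is complete and is essentially Ligozat's original proof.
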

Next, we recall a theorem of Serre on the divisibility of Fourier coefficients of any modular form $f(z)\in M_{\ell}(\Gamma_0(N), \chi)$.
\begin{theorem}\cite[Theorem 2.65]{ono2004}\label{Serre_thm}
If $f(z)\in M_{\ell}(\Gamma_0(N), \chi)$ has Fourier expansion 
$$f(z)=\sum_{n=0}^{\infty}c(n)q^n\in \mathbb{Z}[[q]],$$
then, for given any positive integer $m$, there is a constant $\alpha>0$  such that	
$$ \# \left\{n\leq X: c(n)\not\equiv 0 \pmod{m} \right\}= \mathcal{O}\left(\frac{X}{(\log{}X)^{\alpha}}\right).$$
\end{theorem}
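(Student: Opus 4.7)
The plan is to execute Serre's 1974 argument (``Divisibilit\'e de certaines fonctions arithm\'etiques''), which combines the theory of modular forms modulo a prime power, Deligne's Galois representations attached to eigenforms, the Chebotarev density theorem, and a classical counting estimate for integers with restricted prime factorizations. By the Chinese Remainder Theorem it suffices to treat the case $m = r^j$ for a fixed prime $r$ and integer $j \geq 1$. We work with the reduction $\bar f$ in the finite-dimensional $\mathbb{Z}/r^j\mathbb{Z}$-module of mod-$r^j$ modular forms of appropriate level; the Hecke algebra $\mathbb{T}$ acting on this module is finite, so $\bar f$ admits a decomposition into generalized Hecke-eigensystem components.

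The crux is to produce a positive-density set $S$ of primes $p \nmid Nr$ such that, for every $p \in S$, one has $c(pn) \equiv 0 \pmod{r^j}$ whenever $\gcd(p, n) = 1$. To this end, to each generalized eigensystem appearing in $\bar f$ one attaches a continuous semisimple Galois representation $\bar\rho: \mathrm{Gal}(\overline{\mathbb{Q}}/\mathbb{Q}) \to \mathrm{GL}_2(\mathbb{Z}/r^j\mathbb{Z})$ via Deligne's construction (lifted from mod $r$ to mod $r^j$ through the finite local structure of $\mathbb{T}$). The image of $\bar\rho$ is a finite group, and by Chebotarev a positive-density set of Frobenii can be arranged so that the corresponding Hecke eigenvalues $a_p$ are all $\equiv 0 \pmod{r^j}$. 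Intersecting the analogous sets over the finitely many components of $\bar f$ yields the desired $S$.

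Given $S$, the standard Hecke recurrence $c(p^a n) = a_p\, c(p^{a-1}n) - \chi(p)\, p^{\ell-1} c(p^{a-2} n)$ (valid for $p \nmid N$ and $\gcd(p, n) = 1$) implies that if $a_p \equiv 0 \pmod{r^j}$ and $v_p(n)$ is odd, then $c(n) \equiv 0 \pmod{r^j}$. Consequently any $n \leq X$ with $c(n) \not\equiv 0 \pmod{r^j}$ must be \emph{$S$-squarefull}, meaning that every prime of $S$ divides $n$ to an even power. A classical theorem of Landau bounds the number of such integers by $O(X/(\log X)^{\delta})$, where $\delta > 0$ is the density of $S$, yielding the theorem with $\alpha = \delta$. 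The main obstacle is the middle step: constructing the lifted Galois representation modulo $r^j$ (rather than merely modulo $r$) for a general, non-newform $f$, and synchronizing the Chebotarev conditions across every local component of $\mathbb{T}$; the possible Eisenstein summand of $f$ requires a separate direct treatment using the divisor-sum expression for its coefficients.
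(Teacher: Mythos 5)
A preliminary remark: the paper contains no proof of this statement to compare against --- it is quoted as Theorem 2.65 of Ono's book, which in turn rests on Serre's 1974 paper \emph{Divisibilit\'e de certaines fonctions arithm\'etiques}. So your proposal has to be judged against Serre's known argument. Measured that way, your skeleton is the right one: CRT reduction to $m=r^j$, a positive-density set $S$ of primes with Hecke eigenvalues vanishing mod $r^j$ produced by Chebotarev, the Hecke recurrence forcing $c(n)\equiv 0\pmod{r^j}$ unless $v_p(n)$ is even for every $p\in S$, and a Landau-type count of the $S$-squarefull integers giving $\mathcal{O}(X/(\log X)^{\delta})$ with $\alpha=\delta$ the density of $S$. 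Those outer steps are correct as stated.

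The genuine gap is the middle step, which you flag honestly but do not close, and which as formulated would fail: one cannot in general attach a continuous representation $\overline{\rho}:\mathrm{Gal}(\overline{\mathbb{Q}}/\mathbb{Q})\to\mathrm{GL}_2(\mathbb{Z}/r^j\mathbb{Z})$ to a generalized eigensystem of the finite mod-$r^j$ Hecke algebra. Eigensystems modulo $r^j$ need not lift to characteristic zero, and for residually reducible components one only has pseudo-representations (or Hecke-algebra-valued representations under an irreducibility hypothesis), to which your ``arrange trace $\equiv 0$ by Chebotarev'' step does not apply off the shelf. Serre's actual proof sidesteps the mod-$r^j$ Hecke algebra entirely: since the operators $T_p$ with $p\nmid N$ commute and act semisimply on all of $M_{\ell}(\Gamma_0(N),\chi)$ (on the cuspidal part via the Petersson inner product, on the Eisenstein part explicitly), one writes $f$ in characteristic zero as a $\overline{\mathbb{Q}}$-linear combination of genuine eigenforms and Eisenstein eigenseries, clears the bounded denominators of the multipliers, and reduces the $\lambda$-adic representations of Deligne (and Deligne--Serre in weight one) modulo a slightly larger power $r^{j+e}$. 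Chebotarev is then applied once, to the product of these finite-image representations, on the conjugacy class of complex conjugation: by Cayley--Hamilton, any $c$ with $c^2=1$ and $\det c=-1$ satisfies $\mathrm{tr}(c)\cdot c=0$, hence $\mathrm{tr}(c)=0$ even over $\mathbb{Z}/2^{j}\mathbb{Z}$, and oddness of the determinant kills the Eisenstein traces $\chi_1(p)+\chi_2(p)p^{\ell-1}$ by the same computation --- so a single positive-density condition handles every constituent simultaneously, with no separate Eisenstein treatment and no lifting problem. With that replacement your outline becomes exactly Serre's proof.
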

Notice that, Theorem \ref{Serre_thm} implies that for such a modular form $f(z)$ and for every positive integer $m$, ``almost all" of the $c(n)$ are zero modulo $m$. More precisely, 
\begin{align*}
\lim _{X\rightarrow \infty}\delta_0(f, m; X)=\lim_{X\to\infty} \frac{\# \left\{n\leq X: c(n)\equiv 0 \pmod{m}\right\}}{X}&=1.
\end{align*}
\par The following lemma is an easy consequence of binomial theorem.
\begin{lemma}\label{lemma_binom}
For positive integers $m$ and $k$, we have
\begin{align*}
f^{2^k}_m\equiv f^{2^{k-1}}_{2m}\pmod{2^k}.
\end{align*}
\end{lemma}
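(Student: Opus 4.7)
The plan is to prove the congruence by induction on $k$, working term by term in the infinite product defining $f_m$. Since $\mathbb{Z}[[q]]/2^k\mathbb{Z}[[q]]$ is a ring, it suffices to establish the polynomial identity
$$(1-x)^{2^k} \equiv (1-x^2)^{2^{k-1}} \pmod{2^k}$$
in $\mathbb{Z}[x]$ for every $k \geq 1$, then specialize $x = q^{mn}$ and multiply over $n \geq 1$.

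For the base case $k=1$, direct expansion gives $(1-x)^2 = 1 - 2x + x^2 \equiv 1 + x^2 \equiv 1 - x^2 \pmod{2}$. For the inductive step, assume the claim for $k-1$ with $k \geq 2$, so that we may write
$$(1-x)^{2^{k-1}} = (1-x^2)^{2^{k-2}} + 2^{k-1} h(x)$$
for some $h(x) \in \mathbb{Z}[x]$. Squaring both sides yields
$$(1-x)^{2^k} = (1-x^2)^{2^{k-1}} + 2^k (1-x^2)^{2^{k-2}} h(x) + 2^{2k-2} h(x)^2.$$
Because $2k-2 \geq k$ whenever $k \geq 2$, both error terms on the right are divisible by $2^k$, completing the induction.

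Substituting $x = q^{mn}$ gives $(1 - q^{mn})^{2^k} \equiv (1 - q^{2mn})^{2^{k-1}} \pmod{2^k}$ for each $n \geq 1$. Taking the infinite product over $n$ (valid term by term in the $2$-adic topology on $\mathbb{Z}[[q]]$, since congruences modulo $2^k$ are preserved under multiplication) gives
$$f_m^{2^k} = \prod_{n=1}^{\infty}(1-q^{mn})^{2^k} \equiv \prod_{n=1}^{\infty}(1-q^{2mn})^{2^{k-1}} = f_{2m}^{2^{k-1}} \pmod{2^k},$$
as claimed. There is essentially no obstacle here; the only point that needs a brief word is the justification that infinite products preserve congruences modulo $2^k$, but this is immediate from the fact that each coefficient of $q^N$ on either side depends on only finitely many factors.
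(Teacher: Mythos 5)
Your proof is correct, and it is the standard fleshing-out of exactly what the paper intends: the paper gives no written proof, merely remarking that the lemma ``is an easy consequence of binomial theorem,'' and your argument (base case $(1-x)^2\equiv 1-x^2\pmod 2$ from the binomial expansion, then lifting modulo higher powers of $2$ by squaring) is that standard argument. The only stylistic remark is that one could run the same squaring induction directly on the power series $f_m^{2^{k-1}}\equiv f_{2m}^{2^{k-2}}\pmod{2^{k-1}}$, which avoids the (correctly justified, but unnecessary) aside about infinite products preserving congruences.
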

We will frequently use the congruences in Lemma \ref{lemma_binom} without explicitly mentioning them. The next lemma contains some known $5$-dissections of certain $q$-products.
\begin{lemma}\label{lemma2.6}
We have
\begin{align}
f_1&=f_{25}\left(R(q^5)^{-1}-q-q^2R(q^5) \right),\label{2.1}\\
\frac{1}{f_1}&=\frac{f^5_{25}}{f^6_5}\left( R(q^5)^{-4}+qR(q^5)^{-3}+2q^2R(q^5)^{-2}+3q^3R(q^5)^{-1}+5q^4-3q^5R(q^5)\right.\nonumber\\
&\quad\left.+2q^6R(q^5)^2-q^7R(q^5)^3+q^8R(q^5)^4\right),\label{2.2}
\end{align}
where $\displaystyle R(q)=\frac{(q;q^5)_{\infty}(q^4;q^5)_{\infty}}{(q^2;q^5)_{\infty}(q^3;q^5)_{\infty}}$.
\end{lemma}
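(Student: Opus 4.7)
The plan is to establish both identities using Euler's pentagonal number theorem together with Ramanujan's quintic modular equation for the Rogers--Ramanujan continued fraction, both of which are classical.

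For \eqref{2.1}, I would start from
\[
f_1=\sum_{n\in\mathbb{Z}}(-1)^n q^{n(3n-1)/2}
\]
and $5$-dissect by splitting according to $n\pmod 5$. A direct check shows that $n(3n-1)/2$ takes only the residues $\{0,1,2\}\pmod 5$, which is why the right-hand side of \eqref{2.1} has only three summands (rather than five). For each residue class I would rewrite the resulting bilateral theta-series using Jacobi's triple product
\[
\sum_{n\in\mathbb{Z}}(-1)^n z^n q^{n(n-1)/2}=(q;q)_\infty\,(z;q)_\infty\,(q/z;q)_\infty,
\]
applied with base $q^{25}$ and suitable specializations of $z$. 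Regrouping the resulting infinite products over residues modulo $25$ then pulls out $f_{25}$ as a common factor and identifies the remaining quotients as $R(q^5)^{-1}$ and $R(q^5)$, yielding \eqref{2.1}.

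For \eqref{2.2}, I would combine \eqref{2.1} with Ramanujan's quintic modular equation for the continued fraction, which in the paper's convention (so that $R(q)$ has constant term $1$) reads
\[
\frac{f_1^6}{f_5^6}=R(q)^{-5}-11q-q^2R(q)^5;
\]
replacing $q$ by $q^5$ gives $f_5^6/f_{25}^6=R^{-5}-11q^5-q^{10}R^5$, where $R:=R(q^5)$. Using \eqref{2.1} to write $f_1=f_{25}A$ with $A:=R^{-1}-q-q^2R$, identity \eqref{2.2} is equivalent to $A\cdot B=f_5^6/f_{25}^6$, where $B$ denotes the degree-eight Laurent polynomial in $R$ appearing inside the bracket on the right-hand side of \eqref{2.2}. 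The proof thus reduces to the purely algebraic verification
\[
\bigl(R^{-1}-q-q^2R\bigr)\cdot B=R^{-5}-11q^5-q^{10}R^5,
\]
which one checks by direct expansion: the coefficients of $R^{-4},R^{-3},R^{-2},R^{-1},R,R^2,R^3,R^4$ on the left all cancel, leaving exactly the three surviving terms on the right.

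The main obstacle is the bookkeeping in \eqref{2.1}: one must identify which of the residue classes appearing after Jacobi's triple product produce the correct pieces and verify that they recombine into $f_{25}\cdot R(q^5)^{\pm 1}$ with the precise signs and powers of $q$. Once \eqref{2.1} is in hand, the reduction of \eqref{2.2} to a finite polynomial identity plus the classical modular equation is conceptually straightforward. Both identities are well known and can be cross-referenced with Berndt's treatment of Ramanujan's notebooks; the statement of Lemma \ref{lemma2.6} is being recorded here for use in subsequent congruence proofs.
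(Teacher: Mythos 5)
The paper itself does not prove Lemma \ref{lemma2.6}: its proof is a two-line citation identifying \eqref{2.1} with eq.\ (8.1.1) and \eqref{2.2} with eq.\ (8.4.4) of Hirschhorn's book \cite{hirschhorn}, so you are supplying a derivation where the authors give only a reference. Your derivation of \eqref{2.2} from \eqref{2.1} is complete and correct, and is in fact the standard route (essentially how Hirschhorn obtains (8.4.4)): writing $f_1=f_{25}A$ with $A=R^{-1}-q-q^2R$, $R:=R(q^5)$, identity \eqref{2.2} is equivalent to $AB=f_5^6/f_{25}^6$; expanding $AB$ the coefficients of $R^{-4},\dots,R^{4}$ do all cancel, leaving exactly $R^{-5}-11q^5-q^{10}R^5$, and your normalization of the quintic relation $f_1^6/f_5^6=R(q)^{-5}-11q-q^2R(q)^5$ for the paper's $R$ (which omits the factor $q^{1/5}$) is also correct. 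Your residue computation in \eqref{2.1} is right as well: $n(3n-1)/2$ hits only $0,1,2\pmod 5$, and the class $n\equiv1\pmod{5}$ yields $-qf_{25}$ at once from the pentagonal number theorem with $q\to q^{25}$.

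The genuine soft spot is your claim that, for the remaining classes, Jacobi's triple product followed by ``regrouping the resulting infinite products over residues modulo $25$'' produces $f_{25}R(q^5)^{\mp1}$. It does not: the exponent classes $\equiv0$ and $\equiv2\pmod 5$ each consist of two distinct quadratic progressions ($n\equiv0,2$ and $n\equiv3,4\pmod 5$, respectively), so after the triple product each contributes a \emph{sum} of two bilateral theta series with base $q^{75}$; for instance
\begin{align*}
\sum_{\substack{n\in\mathbb{Z}\\ n\equiv 0,2\ (\mathrm{mod}\ 5)}}(-1)^nq^{n(3n-1)/2}=f(-q^{35},-q^{40})+q^5f\bigl(-q^{10},-q^{65}\bigr),
\end{align*}
and this must be shown to equal $f_{25}R(q^5)^{-1}=f(-q^{10},-q^{15})^2/f_5$, which is \emph{not} a formal rearrangement of infinite products but a genuine theta addition formula. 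The standard way to close this step is the quintuple product identity (or an equivalent two-term theta identity, as in Hirschhorn's own proof of (8.1.1)); without it, the ``bookkeeping'' you describe would stall precisely at this recombination. Since both identities are classical and you anchor them to the literature, the fix is easy: either invoke the quintuple product identity explicitly in the proof of \eqref{2.1}, or cite \eqref{2.1} as the paper does and retain your (complete) algebraic deduction of \eqref{2.2}.
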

\begin{proof}
Equation \eqref{2.1} is \cite[eq. (8.1.1)]{hirschhorn} and equation \eqref{2.2} is \cite[eq. (8.4.4)]{hirschhorn}.
\end{proof}
\section{Proof of Theorem \ref{thm5.1}}
To prove Theorem \ref{thm5.1}, we first recall Ramanujan's general theta function $f(a,b)$ and two special cases $\varphi(q)$ and $\psi(q)$ of that along with certain identities involving them. For more details, see, for example \cite{berndtIII,berndt}. Ramanujan's general theta function
$f(a,b)$, which is defined for $|ab|<1$, is given by
\begin{align*}
f(a,b):=\sum_{n=-\infty}^{\infty}a^{\frac{n(n+1)}{2}}b^{\frac{n(n-1)}{2}},
\end{align*}
and its special cases are:
\begin{align}
\varphi(q)&:=f(q,q)=\sum_{n=-\infty}^{\infty}q^{n^2}=\frac{f^5_2}{f^2_1f^2_4},\label{5.3.1}\\
\psi(q)&:=f(q,q^3)=\sum_{n=0}^{\infty}q^{{n+1 \choose 2}}=\frac{f^2_2}{f_1}, \label{5.3.2}\\
\varphi(-q)&=f(-q,-q)=\sum_{n=-\infty}^{\infty}(-1)^nq^{n^2}=\frac{f^2_1}{f_2}.\label{5.3.3}
\end{align} 
Last equalities in \eqref{5.3.1}, \eqref{5.3.2} and \eqref{5.3.3} are due to Jacobi's triple product identity given by \cite[p. 10]{berndt}
\begin{align*}
f(a,b)=(-a;ab)_{\infty}(-b;ab)_{\infty}(ab;ab)_{\infty}.
\end{align*}
Now, from \eqref{5.3.1} and \eqref{5.3.3}, we have
\begin{align}\label{5.3.4}
\varphi(-q)=\frac{\varphi^2(-q^2)}{\varphi(q)}.
\end{align}
We next recall two identities from \cite[p. 40]{berndtIII} which relate $\varphi(q)$ and $\psi(q)$, namely
\begin{align}
\varphi(q)+\varphi(-q)=2\varphi(q^4),\label{5.3.5}\\
\varphi(q)-\varphi(-q)=4q\psi(q^8).\label{5.3.6}
\end{align}
From \eqref{5.3.5} and \eqref{5.3.6}, we deduce that
\begin{align}
\varphi(q)=\varphi(q^4)+2q\psi(q^8),\label{5.3.7}\\
\varphi(-q)=\varphi(q^4)-2q\psi(q^8).\label{5.3.8}
\end{align}
The Lambert series representation of $\varphi^2(q)$ is given by \cite[p. 58]{berndt}
\begin{align}\label{5.3.9}
\varphi^2(q)=1+4\sum_{n=1}^{\infty}\frac{q^n}{1+q^{2n}}.
\end{align}
Lastly, we recall from \cite[p. 107]{berndt}
\begin{align}\label{5.3.10}
\sum_{n=1}^{\infty}\left(\frac{n}{5}\right)\frac{q^n}{(1-q^n)^2}=q\frac{(q^5;q^5)^5_{\infty}}{(q;q)_{\infty}},
\end{align}
where $\left( \frac{n}{5}\right) $ denotes the Legendre symbol. From \eqref{5.3.10}, we arrive at
\begin{align}\label{5.3.11}
\sum_{\substack{n=1\\ 5\nmid n}}^{\infty}\frac{q^n}{1+q^{2n}}\equiv q\frac{f^5_5}{f_1}\pmod{2}.
\end{align}
We are now equipped enough to prove Theorem \ref{thm5.1}.
\begin{proof}[Proof of Theorem \ref{thm5.1}] From \eqref{5.1.5}, we have
\begin{align}
\sum_{n=0}^{\infty}\sigma_e\text{mex}(n)q^n&=\frac{1}{2}\left(\frac{f^2_2}{f^2_1}-f^2_1\right)\nonumber\\
&=\frac{1}{2}f_2\left(\frac{f_2}{f^2_1}-\frac{f^2_1}{f_2}\right)\nonumber\\
&=\frac{1}{2}f_2\left(\frac{1}{\varphi(-q)}-\varphi(-q)\right)\nonumber\\
&=\frac{1}{2}f_2\left(\frac{\varphi(q)}{\varphi^2(-q^2)}-\varphi(-q)\right),\label{5.3.12}
\end{align}	
where the last equality is due to \eqref{5.3.4}. Next, invoking \eqref{5.3.7} and \eqref{5.3.8} in \eqref{5.3.12}, we deduce that
\begin{align}
\sum_{n=0}^{\infty}\sigma_e\text{mex}(n)q^n=\frac{1}{2}f_2\left(\frac{\varphi(q^4)}{\varphi^2(-q^2)}-\varphi(q^4)+2q\left( \frac{\psi(q^8)}{\varphi^2(-q^2)}+\psi(q^8)\right) \right).\label{5.3.13}
\end{align}	
Extracting terms with even powers of $q$ on both sides of \eqref{5.3.13} and then replacing $q^2$ by $q$, we obtain
\begin{align}
\sum_{n=0}^{\infty}\sigma_e\text{mex}(2n)q^n&=\frac{1}{2}f_1\left(\frac{\varphi(q^2)}{\varphi^2(-q)}-\varphi(q^2) \right),\nonumber\\
&=\frac{f_1\varphi(q^2)}{\varphi^2(-q)}\left(\frac{1-\varphi^2(-q)}{2}\right),\nonumber\\
&=\frac{f_1\varphi(q^2)}{\varphi^2(-q)}\left(2\sum_{n=1}^{\infty}\frac{(-1)^{n+1}q^n}{1+q^{2n}}\right),\label{5.3.14}
\end{align} 
where the last equality follows from \eqref{5.3.9}. Reducing \eqref{5.3.14} modulo $4$ yields
\begin{align}
\sum_{n=0}^{\infty}\sigma_e\text{mex}(2n)q^n&\equiv2f_1\sum_{n=1}^{\infty}\frac{q^n}{1+q^{2n}}\nonumber\\
&\equiv 2f_1\left(\sum_{\substack{n=1\\ 5\mid n}}^{\infty}\frac{q^n}{1+q^{2n}}+\sum_{\substack{n=1\\ 5\nmid n}}^{\infty}\frac{q^n}{1+q^{2n}}\right)\nonumber\\
&\equiv 2f_1\left(\sum_{n=1}^{\infty}\frac{q^{5n}}{1+q^{10n}}+q\frac{f^5_5}{f_1}\right)\pmod{4}, \label{5.3.15}
\end{align}
where the last equality is due to \eqref{5.3.11}. Finally, employing $5$-dissections of $f_1$ and $\frac{1}{f_1}$ from Lemma \ref{lemma2.6} in \eqref{5.3.15} and extracting terms of the type $q^{5n+3}$ on both sides, we arrive at
\begin{align}
\sum_{n=0}^{\infty}\sigma_e\text{mex}(10n+6)q^{5n+3}\equiv4q^3\frac{f^6_{25}}{f_5}\left(\frac{1}{R^2(q^5)}-\frac{1}{R^3(q^5)} \right) \pmod{4}.\label{5.3.16}
\end{align}
Clearly, \eqref{5.3.16} implies \eqref{thm1.5.1}. Similarly, employing $5$-dissections of $f_1$ and $\frac{1}{f_1}$ from Lemma \ref{lemma2.6} in \eqref{5.3.15} and extracting terms of the type $q^{5n+4}$ on both sides, we arrive at
\begin{align}
\sum_{n=0}^{\infty}\sigma_e\text{mex}(10n+8)q^{5n+4}\equiv4q^4\frac{f^5_{25}}{f_5}\left(\frac{1}{R^2(q^5)}-\frac{1}{R(q^5)} \right) \pmod{4}.\label{5.3.17}
\end{align}
Clearly, \eqref{5.3.17} implies \eqref{thm1.5.2}. This completes the proof.
\end{proof}
\section{Proof of Theorem \ref{thm5.2}}
In order to prove Theorem \ref{thm5.2}, we first observe the pattern in the coefficients of $q$-product $f^2_1$. Define
\begin{align*}
f^2_1:=\sum_{n=0}^{\infty}a(n)q^n.
\end{align*}
In \cite{Cooper}, Cooper, Hirschhorn, and Lewis studied the powers of Euler's product $(q;q)_{\infty}=f_1$. In particular, we use their result regarding the coefficients of $f^2_1$, which we write as a lemma below.
\begin{lemma}\cite[Theorem 1]{Cooper}\label{lemma5.4.1}
Let $p\equiv5,7,11\pmod{12}$ be a prime. Then
\begin{align}\label{5.4.1}
a\left( pn+\frac{p^2-1}{12}\right)=\epsilon\ a\left( \frac{n}{p}\right),
\end{align}
where $\epsilon=\begin{cases}
1 & p\equiv7,11\pmod{12};\\
-1 & p\equiv5\pmod{12}.
\end{cases}$
\end{lemma}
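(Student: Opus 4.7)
The plan is to derive a theta-series representation of $f_1^2$ and recast Lemma \ref{lemma5.4.1} as an arithmetic identity for sums of two squares twisted by a Dirichlet character, which can then be verified by a case analysis over primes in the Gaussian integers.

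The first step would be to apply Euler's pentagonal number theorem $f_1 = \sum_{m\in\mathbb{Z}}(-1)^m q^{m(3m-1)/2}$ and complete the square via $m(3m-1)/2 = ((6m-1)^2-1)/24$. As $m$ runs through $\mathbb{Z}$, the value $6m-1$ runs through all integers $\equiv -1 \pmod 6$, and the sign $(-1)^m$ coincides with the value at $6m-1$ of the primitive real Dirichlet character $\chi_{12}$ modulo $12$. Symmetrizing $\ell \mapsto -\ell$ then produces the Jacobi-type identity
\[
2\eta(z) \;=\; \sum_{\ell\in\mathbb{Z}}\chi_{12}(\ell)\, q^{\ell^2/24}.
\]
Squaring and comparing coefficients in $q^{1/12} f_1^2 = \eta(z)^2$ gives $4\,a(n) = r(24n+2)$, where
\[
r(N) \;:=\; \sum_{\substack{u,v\in\mathbb{Z}\\ u^2+v^2=N}}\chi_{12}(u)\chi_{12}(v).
\]
Substituting $n \mapsto pn + (p^2-1)/12$ turns $24n+2$ into $2p(12n+p)$, so Lemma \ref{lemma5.4.1} is equivalent to
\[
r\bigl(2p(12n+p)\bigr) \;=\; \epsilon\cdot r\bigl(24(n/p)+2\bigr),
\]
with the right-hand side interpreted as $0$ whenever $p\nmid n$.

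I would then finish by case analysis on the splitting of $p$ in $\mathbb{Z}[i]$. When $p\equiv 7,11\pmod{12}$, we have $p\equiv 3\pmod 4$, so $p$ is inert in $\mathbb{Z}[i]$; any representation $u^2 + v^2 = 2p(12n+p)$ then forces $p\mid u + iv$, hence $p\mid u$ and $p\mid v$. This requires $p\mid 12n+p$, i.e.\ $p\mid n$, and the substitution $u = pu'$, $v = pv'$ gives a bijection with representations of $24(n/p)+2$. Since $\chi_{12}(p)^2 = 1$, the characters match, producing $\epsilon = 1$. When $p\equiv 5\pmod{12}$, $p = \pi\bar\pi$ splits in $\mathbb{Z}[i]$, and the argument is more subtle: classifying representations $u+iv = (1+i)\alpha$ (with $|\alpha|^2 = p(12n+p)$) by the $\pi$- and $\bar\pi$-adic valuations of $\alpha$, one finds that the ``off-diagonal'' contributions (exactly one of the two valuations positive) are paired by complex conjugation in such a way that their $\chi_{12}$-contributions cancel, while the ``diagonal'' contributions (both valuations positive, forcing $p\mid n$) combine with the Gaussian factorization to yield the sign $\chi_{12}(p) = -1 = \epsilon$.

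The main obstacle is the sign bookkeeping in the split case $p\equiv 5\pmod{12}$: it is not obvious a priori that the Gaussian-integer factorization produces exactly one factor of $\chi_{12}(p)$ rather than none or two, and one must verify the off-diagonal cancellation carefully (noting in particular that $p\equiv 1\pmod{12}$ would also split but the analogous identity does \emph{not} hold there). A conceptually cleaner alternative is to observe that $\eta(z)^2$ is a weight-$1$ cusp form with Nebentypus $\chi_{12}$ on an appropriate $\Gamma_0(N)$, so that Lemma \ref{lemma5.4.1} is a manifestation of a Hecke-type relation on $\eta^2$ valid precisely for the primes listed. In any event, this is Theorem 1 of \cite{Cooper}, which we cite and apply directly.
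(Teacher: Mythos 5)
The paper offers no proof of this lemma at all: it is imported verbatim, with citation, as Theorem 1 of \cite{Cooper}. Your closing move of citing \cite{Cooper} is therefore exactly what the authors do, and as a justification your proposal matches the paper. Your reconstruction sketch is largely the classical route and its first half is sound: the theta form $2\eta(z)=\sum_{\ell\in\mathbb{Z}}\chi_{12}(\ell)q^{\ell^2/24}$ of the pentagonal number theorem, the consequence $4a(n)=r(24n+2)$, the reformulation $r\bigl(2p(12n+p)\bigr)=\epsilon\, r\bigl(24(n/p)+2\bigr)$, and the inert case $p\equiv7,11\pmod{12}$ are all correct and complete.

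The split case, however, is wrong as described, not merely unverified. Writing $u+iv=(1+i)(a+bi)$ gives $(u,v)=(a-b,a+b)$, and complex conjugation $a+bi\mapsto a-bi$ sends $(u,v)$ to $(v,u)$; since $\chi_{12}$ is even, the weight $\chi_{12}(u)\chi_{12}(v)$ is \emph{invariant} under conjugation, so your proposed pairing cannot produce any cancellation. Concretely, for $p=5$, $n=1$ one has $r(170)=0$ via the representations $(13,1)$ and $(7,11)$, which arise from $\alpha=\pi\beta$ and $\alpha=\pi\bar{\beta}$ with $\pi=2+i$ and $N(\beta)=17$: the canceling pair lives \emph{inside} the $\pi$-class (pairing $\beta\leftrightarrow\bar{\beta}$), with the sign flip verified modulo $12$ using $p\equiv5\pmod{12}$. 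Your diagonal claim is also off: substituting $u=pu'$, $v=pv'$ multiplies the weight by $\chi_{12}(p)^2=+1$, not $\chi_{12}(p)$; in fact the diagonal class $p\mathfrak{b}$ contributes $+c(m')$ and the classes $\pi^2\mathfrak{b}$, $\bar{\pi}^2\mathfrak{b}$ contribute $-2c(m')$, totaling $\epsilon=-1$. Your ``cleaner alternative'' is the right idea but carries the wrong character: $\eta(z)^2$ does not satisfy the $\pmod{24}$ conditions of Theorem \ref{thm_ono1} and has no Dirichlet nebentypus; the correct object is $\eta(12z)^2\in M_{1}\bigl(\Gamma_0(144),\left(\tfrac{-1}{\cdot}\right)\bigr)$, exactly as the paper itself records in its proof of Theorems \ref{thm5.3} and \ref{thm5.4} --- not $\chi_{12}$. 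Granting that $\eta(12z)^2$ is a (CM) eigenform, its coefficients $c(N)$ are supported on $N\equiv1\pmod{12}$, so $\lambda_p=c(p)=0$ for every prime $p\not\equiv1\pmod{12}$, and the weight-one Hecke relation $c(pm)=\lambda_p c(m)-\left(\tfrac{-1}{p}\right)c(m/p)$ applied with $m=12n+p$ yields \eqref{5.4.1} with $\epsilon=-\left(\tfrac{-1}{p}\right)$, matching the stated sign table and explaining cleanly why $p\equiv1\pmod{12}$ must be excluded.
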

In \eqref{5.4.1}, $a\left(\frac{n}{p}\right)$ is taken to be zero whenever $\frac{n}{p}$ is not an integer. Notice an easy consequence of Lemma \ref{lemma5.4.1}: For any prime $p\equiv5,7,11\pmod{12}$, we have
\begin{align}\label{5.4.2}
a\left(p^2n+pk+\frac{p^2-1}{12}\right)=0,
\end{align}
where $k$ is an integer with $1\leq k<p$.
\begin{proof}[Proof of Theorem \ref{thm5.2}]
In view of \eqref{5.1.4} and \eqref{5.1.5}, we find that
\begin{align}
\sum_{n=0}^{\infty}\sigma_o\text{mex}(n)q^n&=\frac{1}{2}\left(\frac{f^2_2}{f^2_1}+f^2_1\right)\nonumber\\
&=\frac{1}{2}\left(\frac{f^2_2}{f^2_1}-f^2_1\right)+f^2_1\nonumber\\
&=\sum_{n=0}^{\infty}\sigma_e\text{mex}(n)q^n+\sum_{n=0}^{\infty}a(n)q^n.\label{5.4.3}
\end{align} 
Now, from \eqref{5.4.2} and \eqref{5.4.3}, we obtain that for any prime $p\equiv5,7,11\pmod{12}$, 
\begin{align}\label{5.4.4}
\sigma_o\text{mex}\left(p^2n+pk+\frac{p^2-1}{12}\right) = \sigma_e\text{mex}\left(p^2n+pk+\frac{p^2-1}{12}\right),
\end{align}
where $k$ is an integer with $1\leq k<p$.
\begin{enumerate}
\item From \eqref{5.1.6}, we have that $\sigma_o\text{mex}(n)$ is divisible by $4$ for every odd integer $n$. Also, for any prime $p\equiv5,7,11\pmod{12}$ and an odd integer $k$, $2p^2n+pk+\frac{p^2-1}{12}$ is odd. Therefore, replacing $n$ by $2n$ in \eqref{5.4.4} and reducing it modulo $4$ yields \eqref{5.1.13}.
	
\item Similarly, from \eqref{5.1.7}, we have that $\sigma_o\text{mex}(n)$ is divisible by $8$ whenever $n\equiv1\pmod{4}$. We observe that for any prime $p\equiv5,7,11\pmod{24}$, $$4p^2n+pk+\frac{p^2-1}{12}\equiv1\pmod{4},$$
for $1\leq k<p$ with $k\equiv
\begin{cases}
	1\pmod{4}, & \text{if}\  p\equiv11\pmod{24};\\
	3\pmod{4}, & \text{if}\  p\equiv5,7\pmod{24}.
\end{cases}
$ \\
Therefore, replacing $n$ by $4n$ in \eqref{5.4.4} and reducing it modulo $8$ yields \eqref{5.1.14}.

\item Next, we notice from \eqref{5.1.8} that $\sigma_e\text{mex}(n)$ is divisible by $4$ whenever $n\equiv0\pmod{4}$. Also, for any prime $p\equiv5,7,11\pmod{24}$, $$4p^2n+pk+\frac{p^2-1}{12}\equiv0\pmod{4},$$
for $1\leq k<p$ with $k\equiv
\begin{cases}
	0\pmod{4}, & \text{if}\  p\equiv5,11\pmod{24};\\
	2\pmod{4}, & \text{if}\  p\equiv7\pmod{24}.
\end{cases}
$ \\
Therefore, replacing $n$ by $4n$ in \eqref{5.4.4} and reducing it modulo $4$ yields \eqref{5.1.15}.
\end{enumerate}
This completes the proof.
\end{proof}
\section{Proof of Theorems \ref{thm5.3} and \ref{thm5.4}}
\begin{proof}[Proof of Theorems \ref{thm5.3} and \ref{thm5.4}]
Let 
\begin{align}\label{5.5.1}
A(z):=\sum_{n=0}^{\infty}\alpha(n)q^n:=\frac{f^2_2}{f^2_1}=q^{-1/12}\frac{\eta^2(2z)}{\eta^2(z)}.
\end{align}
From \eqref{5.5.1}, we have
\begin{align*}
A^*(z):=\sum_{n=0}^{\infty}\alpha(n)q^{12n+1}=\frac{\eta^2(24z)}{\eta^2(12z)}.
\end{align*}
Here, $A^*$ is not a modular form since it is not holomorphic at cusps. We multiply $A^*$ by a suitable eta-quotient to obtain an integer weight modular form. It is easy to see using binomial theorem that for any nonnegative integer $k$,
\begin{align*}
\frac{\eta^{2^{k+1}}(12z)}{\eta^{2^k}(24z)}\equiv1\pmod{2^{k+1}}.
\end{align*}
If we define
\begin{align*}
A_k(z):=\frac{\eta^2(24z)}{\eta^2(12z)}\cdot\frac{\eta^{2^{k+1}}(12z)}{\eta^{2^k}(24z)}=\frac{\eta^{2^{k+1}-2}(12z)}{\eta^{2^k-2}(24z)},
\end{align*}
then
\begin{align}\label{5.5.3}
A_k(z)\equiv A^*(z)\pmod{2^{k+1}}.
\end{align}
Now, $A_k(z)$ is an eta-quotient with $N=288$. The cusps of $\Gamma_{0}(288)$ are represented by fractions $\frac{c}{d}$ where $d\mid 288$ and 
$\gcd(c, d)=1$. For example, see \cite{Cotron_2020}. By Theorem \ref{thm_ono2}, we find that $A_k(z)$ is holomorphic at a cusp $\frac{c}{d}$ if and only if	
\begin{align*}
12\left(\frac{\gcd(d,12)^2}{\gcd(d,288/d)^2} \frac{\left(2^{k+1}-2\right)}{12d}- \frac{\gcd(d,2)^2}{\gcd(d,288/d)^2}\frac{\left(2^{k}-2 \right)}{24d}\right)\geq 0.
\end{align*}
Equivalently, if and only if
\begin{align*}
	S:=\gcd(d,12)^2 \left(2^{k+1}-2\right) + \gcd(d,24)^2\left(1-2^{k-1}\right)\geq 0.
\end{align*}
In the following table, we list all the possible values of $S$.
\begin{center}
\begin{tabular}{|p{3.3cm}|p{3cm}|}
\hline
$d\mid 288$ &  $S$\\	
\hline	
$1$   & $2^{k-1}\cdot3-1$\\
\hline
$2$& $4\left(2^{k-1}\cdot3-1\right) $\\
\hline 
$4$& $16\left(2^{k-1}\cdot3-1\right)$\\
\hline
$8,16,32$ & $32$\\
\hline
$3,9$    & $9\left(2^{k-1}\cdot3-1\right)$\\
\hline
$6,18$& $36\left(2^{k-1}\cdot3-1\right)$\\
\hline
$12,36$& $144\left(2^{k-1}\cdot3-1\right)$\\
\hline
$24,48,72,96,144,288$& $288$\\
\hline
\end{tabular}
\end{center}	
\vspace{.5cm}	
It is clear from the table that $S\geq 0$ for all $d\mid 288$ and $k\geq0$. Therefore, by Theorem \ref{thm_ono2}, $A_k(z)$ is holomorphic at every cusp $\frac{c}{d}$. The weight of $A_k(z)$ is $\ell=2^{k}$ and the associated character is the trivial character $\chi_0$. Finally, Theorems \ref{thm_ono1} and \ref{thm_ono2} yields that $A_k(z) \in M_{2^{k}}\left(\Gamma_{0}(288), \chi_0\right)$ for $k\geq 0$. Hence, by Theorem \ref{Serre_thm} of Serre, the Fourier coefficients of $A_k(z)$ are almost always divisible by $m=2^{k+1}$. Now, using \eqref{5.5.3}, we conclude that $A^*(z)$ is lacunary modulo $2^{k+1}$, for all $k\geq0$.
\par Next, let 
\begin{align}\label{5.5.4}
B(z):=\sum_{n=0}^{\infty}\beta(n)q^n:=f^2_1=q^{-1/12}\eta^2(z).
\end{align}
From \eqref{5.5.4}, we have
\begin{align*}
B^*(z):=\sum_{n=0}^{\infty}\beta(n)q^{12n+1}=\eta^2(12z).
\end{align*}
By Theorems \ref{thm_ono1} and \ref{thm_ono2}, we find that $B^*(z)\in M_{1}\left(\Gamma_{0}(144), \chi_1\right)$, where character $\chi_1$ is given by $\chi_1(\bullet)= \left( \frac{-1}{\bullet}\right)$. Hence, by Theorem \ref{Serre_thm} of Serre, we conclude that $B^*(z)$ is lacunary modulo $m=2^{k+1}$, for all $k\geq0$. 
\par From the discussion above, we find that $A(z)$ and $B(z)$ are also lacunary modulo $2^k$, for all positive integers $k$. And, since 
$$2G_o(q)=A(z)+B(z)$$
and
$$2G_e(q)=A(z)-B(z),$$
we conclude that $G_o(q)$ and $G_e(q)$ are lacunary modulo $2^k$, for any positive integer $k$.
\end{proof}
\section{Proof of Theorem \ref{thm5.5}}
To obtain asymptotic formulae for $\sigma_o\text{mex}(n)$ and $\sigma_e\text{mex}(n)$, we use the Tauberian theorem of Ingham \cite{Ingham}. Ingham's theorem enables us to derive asymptotic formula for the coefficients $c(n)$ of certain power series $C(q)=\sum_{n=0}^{\infty}c(n)q^n$ from the behavior of its generating function $C(e^{-y})$ while $y\rightarrow0^+$. The following result is a special case of Ingham's Theorem, see, for example, \cite{Kaur_2022}.
\begin{theorem}\cite[Theorem 1]{Ingham}\label{theorem_Ingham}
Let $C(q)=\sum_{n=0}^{\infty}c(n)q^n$ be a power series with radius of convergence $1$. Assume that $\left\lbrace c(n)\right\rbrace$ is a weakly increasing sequence of nonnegative real numbers. If there are constants $\mu,\nu\in \mathbb{R}$, and $\lambda>0$ such that
\begin{align*}
C(e^{-y})\sim \mu y^{\nu}e^{\frac{\lambda}{y}},\quad \text{as}\ y\rightarrow0^+
\end{align*}
then we have
\begin{align*}
c(n)\sim\frac{\mu}{2\sqrt{\pi}}\frac{\lambda^{\frac{2\nu+1}{4}}}{n^{\frac{2\nu+3}{4}}}e^{2\sqrt\lambda n},\quad \text{as}\ n\rightarrow\infty.
\end{align*}
\end{theorem}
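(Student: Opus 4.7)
The plan is to apply Ingham's Tauberian theorem (Theorem \ref{theorem_Ingham}) separately to $G_o(q)$ and $G_e(q)$. Recall from \eqref{5.1.4} and \eqref{5.1.5} that
\begin{align*}
2G_o(q) = \frac{f_2^2}{f_1^2} + f_1^2, \qquad 2G_e(q) = \frac{f_2^2}{f_1^2} - f_1^2,
\end{align*}
so the task reduces to determining the behavior of $f_1^2(e^{-y})$ and $f_2^2(e^{-y})/f_1^2(e^{-y})$ as $y\to 0^+$, and then verifying the hypotheses of Theorem \ref{theorem_Ingham}.

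First I would extract the asymptotics of $f_1$ and $f_2$ near $q = 1$ from the transformation law \eqref{eta_fn_transf}. Writing $q = e^{-y}$ and $z = iy/(2\pi)$, the identity $\eta(-1/z) = \sqrt{-iz}\,\eta(z)$ gives $\eta(iy/(2\pi)) = \sqrt{2\pi/y}\,\eta(2\pi i/y)$. Since $\eta(2\pi i/y) = e^{-\pi^2/(6y)}\prod_{n\geq 1}(1 - e^{-4\pi^2 n/y})$ and the infinite product tends to $1$ faster than any polynomial as $y \to 0^+$, one obtains
\begin{align*}
f_1(e^{-y}) \sim \sqrt{\tfrac{2\pi}{y}}\, e^{-\pi^2/(6y)}, \qquad f_2(e^{-y}) = f_1(e^{-2y}) \sim \sqrt{\tfrac{\pi}{y}}\, e^{-\pi^2/(12y)}.
\end{align*}
Squaring and dividing yields
\begin{align*}
\frac{f_2^2(e^{-y})}{f_1^2(e^{-y})} \sim \tfrac{1}{2}\, e^{\pi^2/(6y)}, \qquad f_1^2(e^{-y}) \sim \tfrac{2\pi}{y}\, e^{-\pi^2/(3y)}.
\end{align*}
The second quantity is exponentially small compared with the first, so both $G_o(e^{-y})$ and $G_e(e^{-y})$ are asymptotic to $\tfrac{1}{4}\, e^{\pi^2/(6y)}$ as $y\to 0^+$. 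This matches the Ingham form $\mu y^{\nu} e^{\lambda/y}$ with $\mu = 1/4$, $\nu = 0$, $\lambda = \pi^2/6$.

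Before applying Theorem \ref{theorem_Ingham} I must verify that the coefficient sequences $\{\sigma_o\text{mex}(n)\}$ and $\{\sigma_e\text{mex}(n)\}$ are weakly increasing and nonnegative. Nonnegativity is immediate from the definitions. For weak monotonicity, the map that sends a partition $\pi \in \mathcal{P}(n)$ to the partition of $n+1$ obtained by increasing the largest part by $1$ is an injection $\mathcal{P}(n) \hookrightarrow \mathcal{P}(n+1)$ that preserves $\text{mex}(\pi)$ (since the set of parts less than $\text{mex}(\pi)$ is unchanged). In particular it restricts to injections on the partitions with odd mex (respectively even mex) and preserves the value $\text{mex}(\pi)$, so summing gives $\sigma_o\text{mex}(n+1) \geq \sigma_o\text{mex}(n)$ and $\sigma_e\text{mex}(n+1) \geq \sigma_e\text{mex}(n)$.

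With the hypotheses verified, Theorem \ref{theorem_Ingham} with the parameters above produces
\begin{align*}
\sigma_o\text{mex}(n),\ \sigma_e\text{mex}(n) \sim \frac{1/4}{2\sqrt{\pi}}\cdot \frac{(\pi^2/6)^{1/4}}{n^{3/4}}\, e^{2\sqrt{(\pi^2/6)n}} = \frac{1}{8\sqrt[4]{6n^3}}\exp\left(\pi\sqrt{\tfrac{2n}{3}}\right),
\end{align*}
which is the claimed asymptotic. The main conceptual step is the modular-transformation computation of $f_1, f_2$ near the cusp at $1$; the main bookkeeping step is the arithmetic simplification of the Ingham constants to recognize $(\pi^2/6)^{1/4}/(2\sqrt{\pi}) = 1/(2\cdot 6^{1/4})$ and combine it with the $1/4$ factor. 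The subtlest hypothesis to confirm is monotonicity, but the shift-the-largest-part bijection handles it cleanly for both parities simultaneously.
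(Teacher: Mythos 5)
The statement you were asked to prove is Ingham's Tauberian theorem itself: the general implication from $C(e^{-y})\sim \mu y^{\nu}e^{\lambda/y}$ to $c(n)\sim \frac{\mu}{2\sqrt{\pi}}\lambda^{(2\nu+1)/4}n^{-(2\nu+3)/4}e^{2\sqrt{\lambda n}}$ for weakly increasing nonnegative coefficients. Your proposal never engages with that implication: it invokes Theorem \ref{theorem_Ingham} as a black box and instead derives the asymptotics of $\sigma_o\text{mex}(n)$ and $\sigma_e\text{mex}(n)$ --- which is the content of Theorem \ref{thm5.5}, not of the statement at hand. As a proof of the statement it is circular. For what it is worth, the paper does not prove this theorem either; it quotes it from Ingham \cite{Ingham} (in the special-case form appearing in \cite{Kaur_2022}). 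An actual proof would require genuinely Tauberian input, e.g.\ Ingham's argument, which uses the monotonicity of $\{c(n)\}$ to convert the Abelian information $C(e^{-y})\sim \mu y^{\nu}e^{\lambda/y}$, localized by a saddle-point evaluation of $e^{ny}C(e^{-y})$ near $y=\sqrt{\lambda/n}$, into a pointwise asymptotic for $c(n)$; nothing in your write-up substitutes for that.

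Even judged as a (mislabeled) proof of Theorem \ref{thm5.5}, your monotonicity step contains a genuine error: the map that increases the largest part by $1$ does \emph{not} preserve the mex. For instance $\pi=(2,1)$ has $\text{mex}(\pi)=3$, while its image $(3,1)$ has mex $2$; similarly $(1)$ has mex $2$ and its image $(2)$ has mex $1$. In general the mex changes exactly when the largest part equals $\text{mex}(\pi)-1$: your claim that ``the set of parts less than $\text{mex}(\pi)$ is unchanged'' fails there (a part below the mex can be removed, or the bumped part can land on the mex and fill it in), so the map does not restrict to injections $\mathcal{P}_o(n)\rightarrow\mathcal{P}_o(n+1)$ or $\mathcal{P}_e(n)\rightarrow\mathcal{P}_e(n+1)$. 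The paper's Lemma \ref{lemma5.6.3} avoids this with a case split: adjoin a part $1$ whenever $\text{mex}(\pi)\neq 1$ (then $1$ is already a part, so the part set and hence the mex are unchanged), and bump the largest part only when $\text{mex}(\pi)=1$ (then every part is at least $2$, so $1$ stays missing). Your computation of $G_o(e^{-y})\sim\frac{1}{4}e^{\pi^2/(6y)}$ via the eta-transformation \eqref{eta_fn_transf} is correct, and in fact more direct than the paper's route through the Berndt--Kim expansion in Lemma \ref{lemma_Berndt_Kim}, but it belongs to the proof of Theorem \ref{thm5.5}, not of Theorem \ref{theorem_Ingham}.
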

The next lemma is an easy representation of Theorem $1.1$ of Berndt and Kim \cite{Berndt_kim} in our setup. For more details on the terminology involved, see \cite{Berndt_kim}.
\begin{lemma}\cite[Theorem 1.1]{Berndt_kim}\label{lemma_Berndt_Kim}
Let $E_{2n}$, $n\geq0$, denote the $2n$-th Euler number, and let $H_n(x)$, $n\geq0$, be the $n$-th Hermite polynomial. Then, as $y\rightarrow0^+$,
\begin{align}
\sum_{n=0}^{\infty}(-1)^{n}e^{-(an^2+bn)y}\sim e^{\left(\frac{a-2b}{4}\right)y}\sum_{n=0}^{\infty}\frac{E_{2n}a^n}{(2n)!2^{2n+1}}y^n\cdot H_{2n}\left(\frac{b-a}{2\sqrt{a}}\sqrt{y}\right).\label{5.6.0}
\end{align}
\begin{proof}
Replacing $\frac{1-t}{1+t}(=q)$ by $e^{-ay}$ and then $b$ by $\frac{b}{a}$ in \cite[Theorem 1.1]{Berndt_kim}, we can easily deduce \eqref{5.6.0}.
\end{proof}
\end{lemma}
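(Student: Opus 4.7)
The plan is to derive \eqref{5.6.0} directly from Berndt and Kim's Theorem 1.1 in \cite{Berndt_kim} via the two elementary change-of-variable steps announced in the paper, rather than redoing the asymptotic analysis from scratch. First, I would record the statement of \cite[Theorem 1.1]{Berndt_kim} in its native form: it is an asymptotic expansion as $t \to 0^+$ (equivalently $q \to 1^-$, under the bijective correspondence $q = (1-t)/(1+t)$) for the alternating $q$-series $\sum_{n \geq 0}(-1)^n q^{n^2+bn}$, with the right-hand side already expressed in terms of the Euler numbers $E_{2n}$ and Hermite polynomials $H_{2n}$ with a normalized argument depending on $b$. Recording this form pins down the shape of the right-hand side and dictates precisely which reparametrizations are needed to reach \eqref{5.6.0}.

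Next, I would perform the two substitutions in the stated order. First, replace the parameter $b$ in the Berndt-Kim identity by $b/a$; this rescales the linear exponent so that the summand becomes $(-1)^n q^{n^2 + (b/a)n}$. Then set $q = e^{-ay}$, which via $t = (1-q)/(1+q) = \tanh(ay/2)$ sends $t \to 0^+$ as $y \to 0^+$, so the asymptotic regime of Berndt-Kim is exactly the one required by \eqref{5.6.0}. Under these substitutions, the left-hand side of Berndt and Kim's identity becomes $\sum_{n \geq 0}(-1)^n e^{-(an^2+bn)y}$, matching the left-hand side of \eqref{5.6.0} verbatim.

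The remaining step is to track the right-hand side under the same reparametrization. Each factor of $a$ extracted from the exponent propagates into the asymptotic coefficients in a predictable way: the exponential prefactor becomes $\exp\bigl((a-2b)y/4\bigr)$, the power series in $y$ acquires the factor $a^n$ inside the coefficient $E_{2n}a^n/((2n)!\,2^{2n+1})$, and the argument of the Hermite polynomial rescales to $\bigl((b-a)/(2\sqrt{a})\bigr)\sqrt{y}$. Collecting these three transformed pieces reproduces the right-hand side of \eqref{5.6.0} exactly.

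I do not anticipate any substantive obstacle: the deduction is purely bookkeeping on top of \cite[Theorem 1.1]{Berndt_kim}. The only care required is to apply the two substitutions in the prescribed order — first $b \mapsto b/a$, then $q \mapsto e^{-ay}$ — so that the overall scaling by $a$ appears in the correct places on both sides of \eqref{5.6.0}, rather than being double-counted in the coefficient or lost in the Hermite argument.
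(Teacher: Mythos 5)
Your proposal is correct and follows exactly the paper's proof: the paper likewise deduces \eqref{5.6.0} from \cite[Theorem 1.1]{Berndt_kim} by substituting $q=\frac{1-t}{1+t}=e^{-ay}$ and replacing $b$ by $\frac{b}{a}$, with the same bookkeeping on the exponential prefactor, the coefficients $E_{2n}a^n/((2n)!\,2^{2n+1})$, and the Hermite argument $\frac{b-a}{2\sqrt{a}}\sqrt{y}$. Your closing caution about the order of the two substitutions is unnecessary (they act on independent symbols and commute, and indeed the paper states them in the opposite order), but this does not affect the argument's validity.
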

In the following lemma, we prove the weakly increasing nature of $\{\sigma_o\text{mex}(n)\}$ and $\{\sigma_e\text{mex}(n)\}$ using a combinatorial technique. This lemma will be used to employ Theorem \ref{theorem_Ingham} in the proof of Theorem \ref{thm5.5}.
\begin{lemma}\label{lemma5.6.3}
The sequences $\{\sigma_o\emph{mex}(n)\}$ and $\{\sigma_e\emph{mex}(n)\}$ are weakly increasing.
\end{lemma}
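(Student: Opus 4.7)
The plan is to interpret both $\sigma_o\text{mex}(n)$ and $\sigma_e\text{mex}(n)$ as cardinalities of sets of \emph{tagged partitions} and to exhibit injective maps between consecutive levels. For each $n \geq 0$ set
\begin{align*}
T_o(n) &:= \{(\pi, j) : \pi \in \mathcal{P}(n),\ \text{mex}(\pi)\ \text{odd},\ 1 \leq j \leq \text{mex}(\pi)\},\\
T_e(n) &:= \{(\pi, j) : \pi \in \mathcal{P}(n),\ \text{mex}(\pi)\ \text{even},\ 1 \leq j \leq \text{mex}(\pi)\},
\end{align*}
so that $|T_o(n)| = \sigma_o\text{mex}(n)$ and $|T_e(n)| = \sigma_e\text{mex}(n)$. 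It then suffices to construct injections $T_o(n) \hookrightarrow T_o(n+1)$ and $T_e(n) \hookrightarrow T_e(n+1)$.

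The even case is straightforward. If $(\pi, j) \in T_e(n)$ then $\text{mex}(\pi) \geq 2$, so $\pi$ contains a part equal to $1$. The map $(\pi, j) \mapsto (\pi \cup \{1\}, j)$, where $\pi \cup \{1\}$ denotes $\pi$ with one additional copy of the part $1$, preserves the mex (adjoining another $1$ to a partition that already contains $1$ does not alter the set of parts present) and is trivially injective, giving $\sigma_e\text{mex}(n+1) \geq \sigma_e\text{mex}(n)$. The same recipe handles Case A of the odd problem, namely those $(\pi, j) \in T_o(n)$ with $\text{mex}(\pi) \geq 3$; again the mex is preserved and the image in $T_o(n+1)$ consists of pairs whose underlying partition has at least two parts equal to $1$.

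The remaining Case B for $T_o$ is the real obstruction: $\text{mex}(\pi) = 1$ means $\pi$ contains no $1$, and naively adjoining a $1$ would shift the mex to some value $\geq 2$, possibly of the wrong parity. The remedy is a different operation: for nonempty $\pi$ with no $1$, let $\phi_B(\pi)$ be obtained from $\pi$ by increasing its largest part by $1$. Every part of $\pi$ (and hence of $\phi_B(\pi)$) is $\geq 2$, so $\phi_B(\pi)$ still has mex $1$; thus $(\pi, 1) \mapsto (\phi_B(\pi), 1)$ sends into $T_o(n+1)$ and is inverted by decrementing the largest part. The decisive point is that the Case A image contains at least two parts equal to $1$ while the Case B image contains none, so the two images are disjoint and the combined map is an injection $T_o(n) \hookrightarrow T_o(n+1)$ for $n \geq 1$, yielding $\sigma_o\text{mex}(n+1) \geq \sigma_o\text{mex}(n)$. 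The main difficulty lies in Case B, and the key trick is to use the multiplicity of the part $1$ in the target to distinguish the two cases so that the two incompatible moves can be amalgamated into a single well-defined injection.
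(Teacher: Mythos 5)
Your proof is correct and follows essentially the same route as the paper: the paper also adjoins a part $1$ when $\text{mex}(\pi)\neq 1$ (which preserves the mex since $1$ is already a part) and increments the largest part when $\text{mex}(\pi)=1$, obtaining mex-preserving injections $\mathcal{P}_o(n)\hookrightarrow\mathcal{P}_o(n+1)$ and $\mathcal{P}_e(n)\hookrightarrow\mathcal{P}_e(n+1)$. Your tagged-pair sets $T_o(n)$, $T_e(n)$ are merely a bookkeeping reformulation of summing $\text{mex}(\pi)$ over partitions, and your disjointness observation (at least two parts equal to $1$ versus none) makes explicit the injectivity the paper asserts.
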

\begin{proof}
Let $\mathcal{P}_o(n)$ (resp., $\mathcal{P}_e(n)$) denote the set of all partitions $\pi$ of $n$ with $\text{mex}(\pi)$ odd (resp., even). We construct a map
$$\Phi:\mathcal{P}_o(n)\rightarrow\mathcal{P}_o(n+1),$$
where for $\pi\in\mathcal{P}_o(n)$, if $\text{mex}(\pi)\ne1$ then $\Phi(\pi)$ is the partition of $n+1$ with $1$ added as a part to $\pi$, and if $\text{mex}(\pi)=1$ then $\Phi(\pi)$ is the partition of $n+1$ with a largest part, say $\lambda_\pi$, of $\pi$ replaced with $\lambda_\pi+1$. Notice that if $\pi_1,\pi_2\in\mathcal{P}_o(n)$ are two distinct partitions then $\Phi(\pi_1)\ne\Phi(\pi_2)$. Therefore, the map $\Phi$ is injective. Also, under the map $\Phi$, $\text{mex}(\pi)$ of any partition $\pi\in\mathcal{P}_o(n)$ remains same. Thus, for $n\geq1$
$$\sigma_o\text{mex}(n)\leq\sigma_o\text{mex}(n+1)$$
and therefore, $\{\sigma_o\text{mex}(n)\}$ is weakly increasing.
\par For $\{\sigma_e\text{mex}(n)\}$, the proof follows in the similar way with the map defined in this case as follows:
$$\Psi:\mathcal{P}_e(n)\rightarrow\mathcal{P}_e(n+1,)$$
where for $\pi\in\mathcal{P}_e(n)$, $\Psi(\pi)$ is the partition of $n+1$ with $1$ added as a part to $\pi$.
\end{proof}
We are now in a position to present a proof of Theorem \ref{thm5.5}.
\begin{proof}[Proof of Theorem \ref{thm5.5}]
We first recall Euler's Pentagonal Number theorem \cite[p. 12]{berndt}
\begin{align}\label{5.6.1}
(q;q)_{\infty}=\sum_{n=-\infty}^{\infty}(-1)^{n}q^{n(3n+1)/2}.
\end{align}
From \eqref{5.6.1}, we have
\begin{align}
U(q):=(q;q)_\infty&=-1+\sum_{n=0}^{\infty}(-1)^{n}q^{\frac{3}{2}n^2+\frac{1}{2}n}+\sum_{n=0}^{\infty}(-1)^{n}q^{\frac{3}{2}n^2-\frac{1}{2}n}\nonumber\\
&=-1+U_1(q)+U_2(q),\label{5.6.2}
\end{align}
where $$U_1(q)=\sum_{n=0}^{\infty}(-1)^{n}q^{\frac{3}{2}n^2+\frac{1}{2}n}\quad\text{and}\quad U_2(q)=\sum_{n=0}^{\infty}(-1)^{n}q^{\frac{3}{2}n^2-\frac{1}{2}n}.$$
Next, using Lemma \ref{lemma_Berndt_Kim} with $a=\frac{3}{2}$ and $b=\frac{1}{2}$, we obtain that as $y\rightarrow0^+$
\begin{align*}
U_1(e^{-y})&=\sum_{n=0}^{\infty}(-1)^{n}e^{-\left(\frac{3}{2}n^2+\frac{1}{2}n\right)y}\\
&=e^{y/8}\sum_{n=0}^{N}\frac{E_{2n}3^n}{(2n)!2^{3n+1}}y^n\cdot H_{2n}\left(-\sqrt{\frac{y}{6}}\right)+O(y^{N+1/2}).
\end{align*}
Using the fact that $E_0=1$ and $H_0(x)=1$, we arrive at
\begin{align}\label{5.6.3}
\lim_{y\rightarrow0^+}U_1(e^{-y})=\frac{1}{2}.
\end{align}
By repeating the similar argument we find that
\begin{align}\label{5.6.4}
\lim_{y\rightarrow0^+}U_2(e^{-y})=\frac{1}{2}.
\end{align}
Now, \eqref{5.6.2}, \eqref{5.6.3}, and \eqref{5.6.4} imply that
\begin{align*}
\lim_{y\rightarrow0^+}U(e^{-y})=0,
\end{align*}
which in turn implies that
\begin{align}\label{5.6.5}
\lim_{y\rightarrow0^+}U^2(e^{-y})=0.
\end{align}
Using modular transformation property \eqref{eta_fn_transf} of Dedekind's eta-function, it is easy to show that
\begin{align}\label{5.6.6}
\frac{1}{(e^{-y};e^{-y})_{\infty}}\sim\sqrt{\frac{y}{2\pi}}e^{\frac{\pi^2}{6y}}\quad\text{as}\ y\rightarrow0^+.
\end{align}
Using an identity:
\begin{align*}
(-q;q)_{\infty}=\frac{(q^2;q^2)_{\infty}}{(q;q)_{\infty}}
\end{align*}
and \eqref{5.6.6}, we find that
\begin{align}\label{5.6.7}
 (-e^{-y};e^{-y})^2_{\infty}=\frac{(e^{-2y};e^{-2y})^2_{\infty}}{(e^{-y};e^{-y})^2_{\infty}}\sim\left( \frac{\sqrt{\frac{y}{2\pi}}e^{\frac{\pi^2}{6y}}}{\sqrt{\frac{2y}{2\pi}}e^{\frac{\pi^2}{12y}}}\right)^2=\frac{1}{2}e^{\frac{\pi^2}{6y}}
\end{align}
as $y\rightarrow0^+$.
Finally, from \eqref{5.1.4}, \eqref{5.6.5}, and \eqref{5.6.7}, we conclude that
\begin{align}
G_o(e^{-y})&=\frac{1}{2}\left((-e^{-y};e^{-y})^2_{\infty}+(e^{-y};e^{-y})^2_{\infty}\right)\nonumber\\
&\sim\frac{1}{4}e^{\frac{\pi^2}{6y}}\label{5.6.8}
\end{align}
as $y\rightarrow0^+$.
Note that $G_o(q)$ has real nonnegative coefficients and by Lemma \ref{lemma5.6.3}, $\{\sigma_o\text{mex}(n)\}$ is weakly increasing. In view of \eqref{5.6.8}, employing Theorem \ref{theorem_Ingham} with $\mu=1/4$, $\nu=0$, and $\lambda=\pi^2/6$, we obtain that
\begin{align*}
\sigma_o\text{mex}(n)&\sim\frac{1}{8\sqrt[4]{6n^3}}\exp\left( \pi\sqrt{\frac{2n}{3}}\right)\quad\text{as}\ n\rightarrow\infty. 
\end{align*}
By the similar arguments, we arrive at
\begin{align*}
\sigma_e\text{mex}(n)&\sim\frac{1}{8\sqrt[4]{6n^3}}\exp\left( \pi\sqrt{\frac{2n}{3}}\right)\quad\text{as}\ n\rightarrow\infty. 
\end{align*}
This completes the proof of Theorem \ref{thm5.5}.
\end{proof}


\begin{thebibliography}{999}
\bibitem{Andrews2019}
G. E. Andrews and D. Newman, {\it Partitions and the minimal excludant}, Ann. Comb. 23 (2019), 249--254.	

\bibitem{Andrews2020}
G. E. Andrews and D. Newman, {\it The minimal excludant in integer partitions}, J. Integer Seq. 23 (2020), 20.2.3.

\bibitem{Ballantine2020}
C. Ballantine and M. Merca, {\it The minimal excludant and colored partitions}, S\'em. Lothar. Combin. 84B (2020), 23.

\bibitem{Barman2021a}
R. Barman and A. Singh, {\it Mex-related partition functions of Andrews and Newman}, J. Integer Seq. 24 (2021), 21.6.3.

\bibitem{Barman2021b}
R. Barman and A. Singh, {\it On Mex-related partition functions of Andrews and Newman}, Res. Number Theory 7 (2021), 53.

\bibitem{Baruah2022}
N. D. Baruah, S. C. Bhoria, P. Eyyunni, and B. Maji, {\it A refinement of a result of Andrews and Newman on the sum of minimal excludants}, Ramanujan J. (2023). https://doi.org/10.1007/s11139-023-00738-w

\bibitem{berndtIII}
B. C. Berndt, {\it Ramanujan's Notebooks. Part III}, Springer, New York (1991).

\bibitem{berndt}
B. C. Berndt, {\it Number Theory in the Spirit of Ramanujan}, Amer. Math. Soc., Providence, RI, 2006.

\bibitem{Berndt_kim}
B. C. Berndt and B. Kim, {\it Asymptotic expansions of certain partial theta functions}, Proc. Amer. Math. Soc. 139 (2011), 3779--3788.

\bibitem{Chakraborty}
K. Chakraborty and C. Ray, {\it Distribution of generalized mex-related integer partitions}, Hardy-Ramanujan J. 43 (2021), 122--128.

\bibitem{Cooper}
S. Cooper, M. D. Hirschhorn, and R. Lewis, {\it Powers of Euler's product and related identities}, Ramanujan J. 4 (2000), 137--155.

\bibitem{Cotron_2020}
T. Cotron, A. Michaelsen, E. Stamm, and W. Zhu, {\it Lacunary eta-quotients modulo powers of primes}, Ramanujan J. 53 (2020), 269--284.

\bibitem{Silva2020}
R. da Silva and J. A. Sellers, {\it Parity considerations for mex-related partition functions of Andrews and Newman}, J. Integer Seq. 23 (2020), 20.

\bibitem{Du_Tang2023}
J. Q. D. Du and D. Tang, {\it A conjecture of Baruah, Bhoria, Eyyunni and Maji on certain refinement of the sum of minimal excludants}, Bult. Aust. Math. Soc. (2023), 1-12. doi:10.1017/S0004972723000709

\bibitem{Fraenkel2015}
A. S. Fraenkel and U. Peled, {\it Harnessing the unwieldy MEX function. In: Games of No Chance 4}, Math. Sci. Res. Inst. Publ., Cambridge Univ. Press, New York 63 (2015), 77--94.

\bibitem{Grabner2006}
P. J. Grabner and A. Knopfmacher, {\it Analysis of some new partition statistics}, Ramanujan J. 12 (2006), 439--454.

\bibitem{Hardy}
G. H. Hardy and S. Ramanujan, {\it Asymptotic formulae in combinatory analysis}, Proc. London Math. Soc. 17 (1918), 75--115.

\bibitem{hirschhorn}
M. D. Hirschhorn, {\it The Power of $q$}, Springer, Berlin, 2017.

\bibitem{Hopkins2022a}
B. Hopkins, J. A. Sellers, and D. Stanton, {\it Dyson's crank and the mex of integer partitions}, J. Combin. Theory Ser. A 185 (2022), 105523.

\bibitem{Hopkins2022b}
B. Hopkins, J. A. Sellers, and A. J. Yee, {\it Cobinatorial perspectives on the crank and mex partition statistics}, Electron. J. Combin. 29 (2022), P2.11.

\bibitem{Ingham}
A. E. Ingham, {\it A Tauberian theorem for partitions}, Ann. of Math. 42 (1941), 1075--1090.

\bibitem{Kang2021}
J. Kang, R. Li, and A. Y. Z. Wang, {\it A new refinement of Fine's partition theorem}, Bull. Aust. Math. Soc. 104 (2021), 353--361.

\bibitem{Kang2023}
J. Kang, R. Li, and A. Y. Z. Wang, {\it Partition identities related to the minimal excludant}, Discrete Math. 346 (2023), 113302.

\bibitem{Kaur_2022}
P. S. Kaur, S. C. Bhoria, P. Eyyunni, and B. Maji, {\it Minimal excludant over partitions into distinct parts}, Int. J. Number Theory 18 (2022), 2015--2028.

\bibitem{koblitz1993}
N. Koblitz, {\it Introduction to elliptic curves and modular forms}, Graduate Texts in Mathematics, vol. 97, Springer-Verlag, New York, 1984.

\bibitem{ono2004}
K. Ono, {\it The web of modularity: arithmetic of the coefficients of modular forms and $q$-series}, CBMS Regional Conference Series in Mathematics, vol. 102, Amer. Math. Soc., Providence, RI, 2004.

\end{thebibliography}
\end{document}